\numberwithin{equation}{section}
\renewcommand{\tilde}{\widetilde}
\renewcommand{\hat}{\widehat}
\renewcommand{\bar}{\overline}
\newcommand{\R}{\mathbb{R}}
\newcommand{\mC}{\mathbb{C}}
\newcommand{\N}{\mathbb{N}}
\newcommand{\cB}{\mathcal{B}}
\newcommand{\cG}{\mathcal{G}}
\newcommand{\cI}{\mathcal{I}}
\newcommand{\cL}{\mathcal{L}}
\newcommand{\bmalpha}{{\bm{\alpha}}}
\newcommand{\bmbeta}{{\bm{\beta}}}
\renewcommand{\hat}{\widehat}
\renewcommand{\epsilon}{\varepsilon}
\newcommand{\E}[1]{{\mathbb{E}\left[ #1 \right]}} 
\DeclareSymbolFont{cyrletters}{OT2}{wncyr}{m}{n}
\DeclareMathSymbol{\Sha}{\mathalpha}{cyrletters}{"58}
\newtheorem{theorem}{Theorem}[section]
\newtheorem{assumption}[theorem]{Assumption}
\newtheorem{example}[theorem]{Example}
\newtheorem{remark}[theorem]{Remark}
\newtheorem{lemma}[theorem]{Lemma}
\newtheorem{proposition}[theorem]{Proposition}
\title[Randomized neural networks for high-dimensional PDEs]{Approximation Theory and Applications of Randomized Neural Networks for Solving High-Dimensional PDEs}
\author{T.~De Ryck}
\author{S.~Mishra}
\author{Y.~Shang}
\author{F.~Wang}
 \address[T. De Ryck]{Seminar for Applied Mathematics, D-MATH, ETH Z\"urich, Rämistrasse 101, 8092 Zürich, Switzerland.  E-mail: {\tt  tim.deryck@sam.math.ethz.ch} }
 \address[S. Mishra]{Seminar for Applied Mathematics, D-MATH, and ETH AI Center, ETH Z\"urich, Rämistrasse 101, 8092 Zürich, Switzerland. E-mail: {\tt smishra@sam.math.ethz.ch} }
 \address[Y. Shang]{School of Mathematics and Statistics, Xi'an Jiaotong University, Xi'an, Shaanxi 710049, P.R. China. E-mail: {\tt fsy2503@stu.xjtu.edu.cn}}
 \address[F. Wang]{School of Mathematics and Statistics, Xi'an Jiaotong University, Xi'an, Shaanxi 710049, P.R. China. The work of this author was partially supported by the National Natural Science Foundation of China (Grant No. 92470115). E-mail: {\tt feiwang.xjtu@xjtu.edu.cn}}
\begin{document}

\maketitle

\begin{abstract}
We present approximation results and numerical experiments for the use of randomized neural networks within physics-informed extreme learning machines to efficiently solve high-dimensional PDEs, demonstrating both high accuracy and low computational cost. Specifically, we prove that RaNNs can approximate certain classes of functions, including Sobolev functions, in the  $H^2$-norm  at dimension-independent convergence rates, thereby alleviating the curse of dimensionality. Numerical experiments are provided for the high-dimensional heat equation, the Black-Scholes model, and the Heston model, demonstrating the accuracy and efficiency of randomized neural networks.
\end{abstract}

\section{Introduction}

Partial differential equations (PDEs) are vital tools to model phenomena in the sciences and engineering. As analytical solutions are rarely available, one must generally resort to the numerical approximation of the solutions of these PDEs. This has proven to be a very challenging task for high-dimensional PDEs as the computational cost of classical grid-based numerical method might become prohibitively expensive due to the curse of dimensionality. In recent years, multiple frameworks have risen in popularity that have the potential to alleviate this high cost. 

Neural networks are increasingly being used as ansatz spaces for approximating solutions to PDEs \cite{HEJ1, SZ1, Kuty, LMR1,LMPR1} as they are cheap to evaluate and tend to perform well in high-dimensional settings. Notably, they can provably overcome the curse of dimensionality in the approximation of high-dimensional PDEs such as the heat equation and Black-Scholes model \cite{hutzenthaler2020proof, grohs2018proof, jentzen2018proof}. However, training neural networks generally requires a large amount of (simulated) data, the generation of which is usually a major bottleneck in scientific computing. 

Physics-informed learning and most notably \emph{physics-informed neural networks (PINNs)} \cite{Lag1, Lag2, KAR1, KAR2} address this issue by using an unsupervised loss function based on the PDE residual, thereby eliminating the need for any training data. Moreover, PINNs can provably overcome the curse of dimensionality for e.g. linear Kolmogorov equations (such as the heat equation and Black-Scholes model) \cite{deryck2021pinn} and nonlinear parabolic PDEs \cite{deryck2022generic} and have been observed to perform well in other high-dimensional settings as well e.g. \cite{MM1,MM3,RT}. Despite their theoretical appeal, a more widespread usage of PINNs is limited due to the extremely difficult training process that is encountered for many PDEs \cite{krishnapriyan_characterizing, wang2021understanding,wang2022and}, which is related to the spectral properties of the differential operator \cite{deryck2023operator}. 

Due to these major challenges in the optimization of (physics-informed) neural networks in challenging settings, there has been a renewed interest in research focused on approximating PDE solutions using \emph{randomized neural networks (RaNNs) \cite{gallicchio2020deep}}. This concept refers to a class of neural networks where the connections to the hidden layer are fixed after random initialization, and only the output layer weights are adjusted. Neural networks with randomly sampled weights emerge in the work of Barron \cite{barron1993universal} and explored in random vector functional-links (RVFLs) \cite{pao1992functional}  and \emph{random feature models (RFMs) \cite{rahimi2007random}}, but have been popularized together with an efficient way to analytically determine the outer weights by Huang et al. \cite{huang2004extreme, huang2006extreme} under the name \emph{extreme learning machines (ELMs)}. Notable works on RFMs and ELMs include the analysis of Rahimi and Brecht e.g. \cite{rahimi2008uniform, rahimi2007random, rahimi2008weighted}, the adaptation of RFMs to data-driven surrogates for operators mapping between Banach spaces \cite{nelsen2021random}, the analysis of the approximation properties of random ReLU features e.g. \cite{sun2018approximation, gonon2020approximation, gonon2021random} and many more. 

Inspired by the success and popularity of both PINNs and ELMs, Dwidevi and Srinisavan \cite{dwivedi2020physics} have proposed to combine the unsupervised learning character of PINNs with the simplicity and low computational cost of ELMs by introducing \emph{physics-informed extreme learning machines (PIELM)}.  A method that combines the extreme theory of functional connections with single-layer neural networks was proposed in \cite{schiassi2021extreme}. Dong and Li \cite{dong2021local}  combines the ideas of local ELM and domain decomposition to improve accuracy and efficiency in locELM (local extreme learning machines). For problems with more complex geometries, an approach called the random feature method was proposed in \cite{chen2022bridging}. Additionally, RaNNs have been combined with various methods, including the Petrov-Galerkin formulation \cite{shang2023Randomized,shang2024randomized}, the discontinuous Galerkin method \cite{sun2024local,sun2024local_W}, and the hybrid discontinuous Petrov-Galerkin method \cite{Dang2024local}. These works all emphasize the potential of using randomized neural networks by demonstrating their empirical efficiency for low-dimensional PDEs. To the best of our knowledge, there are no available results on the approximation error of using randomized neural networks in high-dimensional PDEs.

This works attempts to address this paucity in the literature. As a first contribution, we establish upper bounds on the approximation error of randomized neural networks with tanh activation function in \emph{higher-order Sobolev norms} ($H^1$ and $H^2$). For linear first-order  and second-order PDEs $\cL[u]=0$ this will imply that the physics-informed loss will be small as well, given that $\norm{\cL[u_\theta]}_{L^2}\lesssim \norm{u_\theta-u}_{H^k}$ for any model $u_\theta$. Our proofs are constructive and draw inspiration from a representation formula proven in \cite{gonon2020approximation} and refined in \cite{gonon2021random}. This representation formula allowed Gonon \cite{gonon2021random} to prove that randomized neural networks can overcome the curse of dimensionality in the approximation of the solution to the Black-Scholes model and more general Lévy models in supremum norm, meaning that the networks size scales at most polynomially in the PDE dimension $d$. For an error tolerance of $\epsilon>0$ this means that the network size is at most $O(d^\alpha \epsilon^{-\beta})$ for $\alpha,\beta>0$ independent of $d$. 

The main theoretical contributions of this work are as follows: 
\begin{itemize}
    \item In Theorem \ref{thm:approx-general} in Section \ref{sec:approx1} we prove for functions of the form
    \begin{equation}
        u(x) = \int_{\R^d} e^{i x \cdot \xi} G(\xi) d\xi
    \end{equation}
    that they can be approximated by shallow randomized neural networks of width $N\in\N$ at a dimension-independent rate of $N^{-1/4}$ in $H^1$-norm and $N^{-1/10+\epsilon}$ in $H^2$-norm, $\epsilon>0$, under some integrability conditions for the probability distributions of the random weights and biases as well as $G$. 
    \item In Theorem \ref{thm:approx-sobolev} in Section \ref{sec:approx2} this result is then used to prove an approximation result for shallow randomized neural networks with uniformly distributed random weights and biases specifically for Sobolev functions. We show that the curse of dimensionality can be fully overcome if sufficient regularity is assumed, with convergence rates that tend to those of the more general Theorem \ref{thm:approx-general} for highly regular functions. 
\end{itemize}
Finally, we expand the literature as well by demonstrating that randomized neural networks can overcome the curse of dimensionality in practice for high-dimensional in PDEs. More precisely: 
\begin{itemize}
    \item In Section \ref{sec:4} we present numerical experiments demonstrating the accuracy and efficiency of randomized neural networks for the high-dimensional heat equation (Section \ref{sec:heat}), Black-Scholes model (Section \ref{sec:bs}) and Heston model (Section \ref{sec:heston}). We let the dimension range from 1 to 100 and show that even in very-high dimensional settings physics-informed extreme learning machines work well and fast. For the heat equation we record $L^2$-errors between $10^{-4} \%$ and $2.4\%$ for computing times between 1 and 29 seconds. For the more difficult Black-Scholes and Heston models we record relative errors of a one to three percent and computation times between 10 seconds and 6 minutes. 
\end{itemize}

\section{Preliminaries}\label{sec:2}

We first give a short introduction to randomized neural networks as a specific type of random feature models (Section \ref{sec:random-nn}), extreme learning machines (Section \ref{sec:elm}), physics-informed neural networks (Section \ref{sec:piml}) and physics-informed extreme learning machines (Section \ref{sec:pielm}). 

\subsection{Randomized neural networks}\label{sec:random-nn}

Given a probability distribution $\nu$ on the space of square-integral functions $L^2(D; \R)$ on domain $D\subset \R^d$, a general \emph{random feature model} is defined as the weighted sum of a number of i.i.d. generated functions $\varphi_1, \ldots \varphi_N \sim \nu$, that is
\begin{equation}\label{eq:rfm}
    U_W: \R^d \to \R: x\mapsto \sum_{i=1}^N W_i \varphi_i(x),
\end{equation}
where the weights $W_1, \ldots W_N\in \R$ can be chosen freely, and will be optimized such that $U_W \approx u$. Often, one chooses parametrized functions $\varphi_i(\cdot) := \varphi(\cdot; \theta_i)$ as random features, such that only (finite-dimensional) parameters $\theta_1, \ldots \theta_N \sim \nu^*$ need to be drawn. 

In this article, we study \emph{randomized neural networks}, i.e. the case where random features are parametrized as feedforward neural network with one hidden layer. In this setting, the definition of a randomized neural network \eqref{eq:rfm} can be specialized to
\begin{equation}\label{eq:rnn}
    U_W^{A,B}: \R^d \to \R: x\mapsto \sum_{i=1}^N W_i \sigma(A_i\cdot x+B_i),
\end{equation}
where we let $\sigma:\R\to \R$ is a nonlinear activation function, we let $A_1, \ldots A_N\in \R^d$ be random weight vectors and we let $B_1, \ldots B_N\in \R$ be random bias scalars; all of which are independent. The goal is to choose the output weights $W=(W_1, \ldots W_N)$ as a function of the random hidden weights $A=(A_1, \ldots A_N)$ and $B=(B_1, \ldots, B_N)$ such that $U_W^{A,B}$ is a good approximation of $u$. We note that one can also add an additional bias $W_0$ to definitions \eqref{eq:rfm} and \eqref{eq:rnn}. 

\subsection{Extreme learning machines}\label{sec:elm}

Huang et al. \cite{huang2004extreme} proposed to analytically determine the weights of the above random single-hidden layer feedforward neural networks (SLFNs) and coined the name ELM for SLFNs. Given access to a training set $\{(x_1, u(x_1)), \ldots (x_n, u(x_n))\}$ the goal is to find the weight vector $W\in \R^N$ such that ideally
\begin{equation}\label{eq:elm1}
    \sum_{i=1}^N W_i \sigma(A_i\cdot x_k +B_i) = u(x_k), \qquad \forall k\in \{1, \ldots, n\}. 
\end{equation}
If we define matrix $H\in \R^{n\times N}$ with $H_{ki} = \sigma(A_i\cdot x_k +B_i)$ and a vector $T\in \R^n$ with $T_k = u(x_k)$ then the above $n$ equations \eqref{eq:elm1} can be written compactly as
\begin{equation}\label{eq:elm2}
    HW = T. 
\end{equation}
Rather than solving this with gradient-based learning algorithms, the ELM method proposes to choose $\hat{W} = H^\dagger T$, where $H^\dagger$ is the Moore-Penrose generalized inverse of $H$. Hence, $\hat{W}$ is the smallest norm least-squares solution of the linear system \eqref{eq:elm2}. As such, the ELM framework provides a simple and extremely fast alternative to training neural networks with gradient-based optimizers. 

\subsection{Physics-informed machine learning}\label{sec:piml}

Physics-informed machine learning is a popular framework in order to choose a model $u_\theta: D\to \R$ in a parametrized ansatz space $\{u_\theta: \theta\in\Theta\}$ with the goal to approximate the solution $u:D\to\R$ of a partial differential equation (PDE), as given by e.g.
\begin{equation}\label{eq:pde}
    \cL[u] = 0 \qquad \text{and} \qquad \cB[u] = 0, 
\end{equation}
where a \emph{differential operator} $\cL$ prescribes the PDE and a \emph{boundary operator} $\cB$ prescribes the boundary conditions and initial condition. The crux of physics-informed learning is that the selection of the optimal model in the ansatz space is done by minimizing the \emph{physics-informed loss function} $L(\theta)$, or a discretization thereof, which is defined as
\begin{equation}\label{eq:piml}
    L(\theta) = \int_D (\cL[u_\theta](x))^2 dx + \lambda \int_{\partial D} (\cB[u_\theta](y))^2 dy,
\end{equation}
where $\lambda>0$ is a hyperparameter. Alternatively, one can adapt the ansatz space in such a way that all models $u_\theta$ satisfy the boundary conditions, i.e. $\cB[u_\theta]=0$ for all $\theta\in\Theta$. In this case we speak of \emph{hard} boundary conditions and the second term on the RHS of \eqref{eq:piml} can be ignored. A physics-informed loss is most commonly used in the combination with neural networks, leading to the popular framework of \emph{physics-informed neural networks (PINNs)}. The main advantage of using a physics-informed learning framework is that training can be done fully unsupervised, without the need for measurements are potentially expensive simulated data. It is also useful in the setting of complex geometries as the method does not require any grid. 

\subsection{Physics-informed extreme learning machines}\label{sec:pielm}

Given the simplicity and speed of extreme learning machines and the unsupervised character of physics-informed machine learning, it comes to no surprise that the combination of those two frameworks has caught the attention of various researchers. It was first coined as \emph{physics-informed extreme learning machine (PIELM)} by \cite{dwivedi2020physics}. We describe the PIELM framework for linear PDEs with linear boundary conditions. Given two sets of multi-indices $\Lambda, B \subset  \N^d_0$, a linear PDE can be written as
\begin{equation}
    \cL[u](x) = \sum_{\bmalpha\in \Lambda} a_\bmalpha(x) D^\bmalpha u(x) - f(x),\qquad  \cB[u](x) = \sum_{\bmbeta\in B} b_\bmbeta(x) D^\bmbeta u(x) - g(x)
\end{equation}
where $D^\bmalpha = \partial^{\bmalpha_1}_{x_1}\cdots \partial^{\bmalpha_d}_{x_d}$ and $a_\bmalpha:D\to\R$, $b_\bmbeta:D\to\R$ are some functions. As a result, if we discretize the physics-informed loss $L(\theta)$ \eqref{eq:piml}  with $n_x$ points in $D$ and $n_y$ points in $\partial D$, then its minimization is equivalent to solving the following linear system in a least-square sense,
\begin{equation}\label{eq:pielm}
\begin{cases}
    \sum_{i=1}^N \sum_{\bmalpha\in \Lambda} a_\bmalpha(x_k) W_i A_i^\bmalpha \sigma^{\norm{\bmalpha}_1}(A_i\cdot x_k +B_i) &= f(x_k), \qquad \forall k\in \{1, \ldots, n_x\}, \\
    \sum_{i=1}^N \sum_{\bmbeta\in B} b_\bmbeta(y_k) W_i A_i^\bmbeta \sigma^{\norm{\bmbeta}_1}(A_i\cdot y_k +B_i) &= g(y_k), \qquad \forall k\in \{1, \ldots, n_y\}, 
\end{cases}
\end{equation}
where $\{x_1, \ldots x_{n_x}\}\subset D$ and $\{y_1, \ldots, y_{n_y}\}\subset \partial D$ are the chosen collocation points. This can be written in the form $HW=T$, such as in \eqref{eq:elm2}, if one defines $H\in \R^{(n_x+n_y)\times N}$ and $T\in \R^{n_x+n_y}$ as
\begin{align}
    &H_{k,i} = \sum_{\bmalpha\in \Lambda} a_\bmalpha(x_k) A_i^\bmalpha \sigma^{\norm{\bmalpha}_1}(A_i\cdot x_k +B_i), \qquad T_{k} = f(x_k)\qquad \forall k\in \{1, \ldots, n_x\}, \\
    &H_{n_x+k,i} = \sum_{\bmbeta\in B} b_\bmbeta(y_k)A_i^\bmbeta \sigma^{\norm{\bmbeta}_1}(A_i\cdot y_k +B_i) , \qquad T_{n_x+k} = g(y_k)\qquad \forall k\in \{1, \ldots, n_y\}. 
\end{align}
The proposed physics-informed ELM solution would then be again $\hat{W} = H^\dagger T$. In what follows, we show approximation results for randomized neural networks in Sobolev norms, as well as experiments demonstrating the speed and accuracy of the PIELM framework. 

\section{Approximation results for randomized NNs in Sobolev norms}\label{sec:3}

In this section, we show that randomized neural networks can approximate Sobolev functions well in higher-order Sobolev norms. In particular, the curse of dimensionality can be alleviated in the sense that dimension-independent convergence rates in terms of the size of the network can be obtained. 

\subsection{Function approximation with generalized randomized neural networks}\label{sec:approx1}

To prepare for the main results of the next subsection, we first consider a \emph{generalized} definition of randomized neural networks, namely of the form
\begin{equation}\label{def:rfm-1}
    U^{A,B,Y}_W(x) = W_0 + \sum_{i=1}^N W_i \sigma(A_i\cdot x+B_i - Y_i),
\end{equation}
where $\sigma$ is tanh activation function. In addition, we make the following assumptions on the distribution of the hidden weights of the randomized neural network:  
\begin{assumption}\label{ass:dist-1}
Let $M\geq 1$. The random variables $A_i, B_i$ and $Y_i$ ($i=1\ldots,N$) are iid and satisfy the following requirements:
\begin{itemize}
	\item the distribution of $A_1$ has a strictly positive Lebesgue-density $\pi_A$ on $\R^d$ and
	\item the distribution of $B_1$ has a strictly positive Lebesgue-density $\pi_B$ on $\R$ and
    \item the distribution of $Y_1$ is uniform on $[0,2M\norm{A_1}_1+1]$.
\end{itemize}
\end{assumption}

\begin{assumption}\label{ass:piA-radial}
The density $\pi_A$ is radially symmetric, meaning that there exists $\pi_A^*:\R\to\R$ such that $\pi_A^*(\norm{\xi}_2) = \pi_A(\xi)$ for all $\xi\in\R^d$. 
\end{assumption}

\begin{assumption}\label{ass:Fbar}
It holds that $\bar{F}(r) := 2\int_{-r}^0 \frac{1}{\pi_B(s)} ds \in (-\infty,\infty)$ for all $r \in \R$. 
\end{assumption}
Next, we introduce the function
 \begin{equation}
     H_\epsilon(x) = \frac{1}{2}(1+\sigma(x/\epsilon)) = \frac{1}{2}(1+\tanh(x/\epsilon))
 \end{equation}
for any $\epsilon>0$ as a smooth approximation of the Heaviside function $H$. We also define $\mu$ as the measure of the uniform distribution on $B_M^d := \{x\in\R^d: \norm{x}_2 \leq M\}$.

 With this notation in place, we will adapt an approximation result from \cite{gonon2021random} for randomized neural networks with ReLU activation to a representation formula in the form of an infinite-width tanh randomized  neural network. 

\begin{proposition}\label{prop:general}
Let $u \colon \R^d \to \R$, let $M \geq 1$ and assume there exists $G \colon \R^d \to \mC$ such that 
	\begin{equation}
	\label{eq:Hrepresentation2}
	u(x) = \int_{\R^d} e^{i x \cdot \xi} G(\xi) d\xi
	\end{equation}
	for all $x \in [-M,M]^d$. For any $\epsilon>0$ we define $u_\epsilon:\R\to\R$ as
\begin{equation}
      u_\epsilon(x) = \int_{\R^d} \int_{- \infty}^\infty \int_0^{x \cdot \xi+u} H_\epsilon(y)  \alpha(\xi,u) d y d u d \xi,
 \end{equation}
where $\alpha$ is defined in terms of $\tilde{g}(\xi) := 2\mathrm{Re}[G](\xi)-\mathrm{Im}[G](\xi)$ and $G$ as 
 \begin{equation}
     \alpha(\xi,u) = -\mathbbm{1}_{(-M \|\xi\|_1,0]}(u)\mathrm{Re}[e^{-i u} G(\xi) + e^{i u} G(-\xi)] + \mathbbm{1}_{[0,1]}(u)\tilde{g}(\xi)-\mathbbm{1}_{[-1,0]}(u)\tilde{g}(-\xi).
 \end{equation}
If Assumption \ref{ass:piA-radial} is satisfied, then for any $\epsilon>0$ and any $\gamma\in(0,1/2)$ there exists a constant $C_\gamma$ such that  
\begin{equation}
    \norm{u-u_\epsilon}_{W^{1,\infty}(B_M^d)} \leq 10M\sqrt{d}\cI_2^* \epsilon \quad \text{and}\quad \norm{u-u_\epsilon}_{H^2_\mu(B_M^d)} \leq C_\gamma d(\pi_A(0)+d)M\cI_4^*\epsilon^\gamma,
\end{equation}
where 
	\begin{equation}
	\label{eq:IfiniteLinfty-star}
	\cI_\ell^* := \max_{k\in \{0, \ldots, \ell \}}\left(\int_{\R^d} \|\xi\|^k \frac{(\abs{G(\xi)}+\abs{G(-\xi)})^2}{ \pi_A(\xi)} d \xi\right)^{\frac{1}{2}}. 
	\end{equation}   
\end{proposition}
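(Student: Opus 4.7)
The strategy is to view $u - u_\epsilon$ as a mollification error. By the ReLU representation of \cite{gonon2021random}, on $B_M^d \subset [-M,M]^d$ we have
\[
u(x) = \int_{\R^d}\int_{\R} (x\cdot\xi+u)_+\,\alpha(\xi,u)\,du\,d\xi = \int_{\R^d}\int_{\R}\int_0^{x\cdot\xi+u} H(y)\,dy\,\alpha(\xi,u)\,du\,d\xi,
\]
so $u - u_\epsilon$ is exactly the error of replacing the Heaviside $H$ by its tanh regularisation $H_\epsilon$. The two elementary inputs are $|H(y)-H_\epsilon(y)| \le e^{-2|y|/\epsilon}$ (whence $\|H-H_\epsilon\|_{L^1(\R)} \le \epsilon$, $\sup_z|\int_0^z(H-H_\epsilon)| \le \epsilon/2$, $\|H-H_\epsilon\|_{L^2(\R)}^2 \lesssim \epsilon$) and the fact that $H_\epsilon' = (2\epsilon)^{-1}\mathrm{sech}^2(\cdot/\epsilon)$ is a symmetric approximate identity with $\int H_\epsilon' = 1$.

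For the $W^{1,\infty}(B_M^d)$ bound I would differentiate once under the integral to get $\partial_{x_j}(u-u_\epsilon)(x) = \int\int \xi_j[H-H_\epsilon](x\cdot\xi+u)\alpha(\xi,u)\,du\,d\xi$, perform the change of variables $v=x\cdot\xi+u$, factor out $\|\alpha(\xi,\cdot)\|_\infty \lesssim |G(\xi)|+|G(-\xi)|$, and use the $L^1$-estimate on $H-H_\epsilon$ to conclude $|\partial_{x_j}(u-u_\epsilon)(x)| \lesssim \epsilon\int|\xi_j|(|G(\xi)|+|G(-\xi)|)\,d\xi$; a weighted Cauchy--Schwarz with weight $\pi_A$ then converts this into $\epsilon\,\cI_2^*$. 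The $L^\infty$ bound on $u - u_\epsilon$ itself is analogous but picks up an extra factor $2M\|\xi\|_1 + O(1)$ from $\int |\alpha(\xi,u)|\,du$ (since $\alpha(\xi,\cdot)$ is supported in $[-M\|\xi\|_1,1]$), producing the $M\sqrt d$ prefactor via $\|\xi\|_1 \le \sqrt d\,\|\xi\|_2$.

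The $H^2_\mu$ bound is the delicate part. Two derivatives produce
\[
\partial^2_{x_jx_k}(u-u_\epsilon)(x) = \int_{\R^d}\xi_j\xi_k\bigl[\alpha(\xi,-x\cdot\xi) - (\alpha(\xi,\cdot)*H_\epsilon')(-x\cdot\xi)\bigr]\,d\xi,
\]
in which the bracket is exactly the mollification error of $\alpha(\xi,\cdot)$ by the approximate identity $H_\epsilon'$. Inspecting the explicit formula for $\alpha$, for each fixed $\xi$ the function $\alpha(\xi,\cdot)$ is piecewise $C^\infty$, supported in $[-M\|\xi\|_1,1]$, with jumps of size $\lesssim|G(\xi)|+|G(-\xi)|$ only at $u\in\{-M\|\xi\|_1,-1,0,1\}$ and Lipschitz constant of the same order on every smooth piece. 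Splitting the $u$-integral into $O(\epsilon)$-windows around the jumps versus their complement gives $\|\alpha(\xi,\cdot)-\alpha(\xi,\cdot)*H_\epsilon'\|_{L^2(\R)}^2\lesssim\epsilon(|G(\xi)|+|G(-\xi)|)^2$, the $\sqrt{\epsilon}$ rate being forced by the jumps. To pass from this to an $L^2_\mu$ bound in $x$, spherical symmetry of $B_M^d$ implies that for any fixed $\xi$ the pushforward of $\mu$ under $x\mapsto x\cdot\xi$ has density bounded by $C\sqrt d/(M\|\xi\|)$ on $[-M\|\xi\|,M\|\xi\|]$, gaining a factor $\|\xi\|^{-1}$; a weighted Cauchy--Schwarz in $\xi$ with weight $\pi_A$, together with the identity $\sum_{j,k}\xi_j^2\xi_k^2=\|\xi\|^4$, then converts the first factor into $(\cI_4^*)^2$, and the leftover geometric integral $\int\pi_A(\xi)\|\xi\|^{-1}\,d\xi$ is controlled by $\pi_A(0)+O(d)$ via radial polar coordinates (Assumption \ref{ass:piA-radial}).

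The main obstacle is bridging the gap between the exponent $1/2$ produced by this programme and the stated $\epsilon^\gamma$ with $\gamma<1/2$: the loss of $\epsilon^{1/2-\gamma}$ arises because once the first Cauchy--Schwarz factor is fixed to contain $\|\xi\|^4$ against $\pi_A$, the leftover weight $\|\xi\|^{-1}$ is only borderline integrable at the origin, which forces the trade $\|\xi\|^{-1} \le C_\gamma\|\xi\|^{-2(1-\gamma)}$ and absorbs the endpoint singularity into a constant $C_\gamma$ that blows up as $\gamma\uparrow 1/2$. Beyond this analytic loss, the real bookkeeping step is to verify that the measure of the set $\{x\in B_M^d : \mathrm{dist}(x\cdot\xi,\text{jump of }\alpha(\xi,\cdot))\lesssim\epsilon\}$ really is $O(\sqrt d\,\epsilon/(M\|\xi\|))$ uniformly in where that jump sits inside $[-M\|\xi\|,M\|\xi\|]$; this is where the radial-symmetry hypothesis on $\pi_A$ becomes indispensable and where the dimensional constants $d(\pi_A(0)+d)M$ in the final bound get assembled.
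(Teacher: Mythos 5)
Your $W^{1,\infty}$ argument is essentially the paper's. For the $H^2$ part, however, you take a genuinely different route: the paper introduces an auxiliary radius $\delta$, splits $\{(u,\xi)\}$ into a Taylor-smooth region, a thin slab $A^c_{x,\delta}$ in $\xi$, and a far tail in $u$, and optimizes over $\delta$; the exponential tail of $H_\epsilon'$ (giving an $e^{-\delta/\epsilon}$ term) together with a logarithmic singularity from $\int_{B_M^d}\min\{1,\|x\|^{-1}\}\,d\mu$ forces the endpoint loss $\gamma<1/2$. You instead bound the mollification error $\alpha(\xi,\cdot)-\alpha(\xi,\cdot)*H_\epsilon'$ directly in $L^2(\R,du)$ for each fixed $\xi$, transfer to $L^2_\mu(dx)$ via the pushforward density of $\mu$ under $x\mapsto x\cdot\xi$, and close with Minkowski's integral inequality and a weighted Cauchy--Schwarz in $\xi$. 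This is cleaner and, if carried out, actually \emph{sharpens} the result to $O(\sqrt{\epsilon})$ with no $\gamma$-loss.

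Two points to tighten. First, the stated bound $\|\alpha(\xi,\cdot)-\alpha(\xi,\cdot)*H_\epsilon'\|_{L^2(\R)}^2\lesssim\epsilon\,\cG(\xi)^2$ is not uniform in $\xi$: the smooth portion of $\alpha(\xi,\cdot)$ is supported on an interval of length $\approx M\|\xi\|_1$ with Lipschitz constant $\lesssim\cG(\xi)$, contributing an additional $\epsilon^2 M\|\xi\|_1\cG(\xi)^2$. This is harmless (the resulting $\sqrt{1+\epsilon M\sqrt d\,\|\xi\|}$ factor is absorbed by $\cI_4^*$ for small $\epsilon$), but it must be recorded. Second, the ``obstacle'' you describe is illusory. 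The integral $\int\pi_A\|\xi\|^{-1}\,d\xi=\omega_{d-1}\int_0^\infty\pi_A^*(r)r^{d-2}\,dr$ converges at the origin for every $d\ge2$, so there is no borderline singularity. Better still, you need never introduce $\|\xi\|^{-1}$ on the $\pi_A$ side at all: pairing $\|\xi\|^{3/2}\cG$ against $\sqrt{\pi_A}\cdot\sqrt{\pi_A}$ via Cauchy--Schwarz gives $(\int\|\xi\|^3\cG^2/\pi_A)^{1/2}(\int\pi_A)^{1/2}=\cI_3^*\le\cI_4^*$ directly, since $\cI_\ell^*$ is defined as a maximum over $k\le\ell$. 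With this choice your programme delivers $\|\partial_j\partial_\ell(u-u_\epsilon)\|_{L^2_\mu}\lesssim d^{1/4}M^{-1/2}\sqrt{\epsilon}\,\cI_4^*$, which (after summing over $j,\ell$) is stronger than the proposition's claim in the rate, the $d$-dependence and the $M$-dependence alike. The remaining work is the genuine assembly of constants --- in particular verifying the pushforward density bound $\sqrt{d}/(M\|\xi\|)$ precisely --- but there is no analytic gap to bridge.
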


\begin{proof}
\textbf{Step 1: construction of approximation.} 
The starting point of the proof is a representation formula that can be found in equation (10) in \cite{gonon2021random}. It states that for all $x \in [-M,M]^d$ it holds that
	\begin{equation}
	\label{eq:u-representation1}
	\begin{aligned}
	u(x) 
	& = \int_{\R^d} \int_{- \infty}^\infty  (x \cdot \xi+u)_+  \alpha(\xi,u) d u d \xi
	\end{aligned}
	\end{equation}
where $\alpha$ is defined in terms of $\tilde{g}(\xi) := 2\mathrm{Re}[G](\xi)-\mathrm{Im}[G](\xi)$ and $G$ as 
 \begin{equation}
     \alpha(\xi,u) = -\mathbbm{1}_{(-M \|\xi\|_1,0]}(u)\mathrm{Re}[e^{-i u} G(\xi) + e^{i u} G(-\xi)] + \mathbbm{1}_{[0,1]}(u)\tilde{g}(\xi)-\mathbbm{1}_{[-1,0]}(u)\tilde{g}(-\xi).
 \end{equation}

In \cite{gonon2021random} the representation formula \eqref{eq:u-representation1} is used to prove the existence of a ReLU neural network that approximates $u$ in $L^\infty$-sense. We will use the function $H_\epsilon$ as a smooth approximation of the Heaviside function $H$. 
 Next, we note from \eqref{eq:u-representation1} that for all $x\in[-M,M]^d$ it holds on the support of $\alpha$ that
\begin{equation}\label{eq:1-10}
    \abs{x\cdot \xi+u} \leq 2M \|\xi\|_1+1 =: B(\xi).
\end{equation}
These observations let us rewrite \eqref{eq:u-representation1} as
\begin{equation}
    u(x) = \int_{\R^d} \int_{- \infty}^\infty \int_0^{x \cdot \xi+u} H(y)  \alpha(\xi,u) d y d u d \xi = \int_{\R^d} \int_{- \infty}^\infty \int_0^{B(\xi)} H(x \cdot \xi+u - y)  \alpha(\xi,u) d y d u d \xi.
\end{equation}
This inspires us to define
\begin{equation}\label{eq:u-epsilon}
    u_\epsilon(x) = \int_{\R^d} \int_{- \infty}^\infty \int_0^{x \cdot \xi+u} H_\epsilon(y)  \alpha(\xi,u) d y d u d \xi = \int_{\R^d} \int_{- \infty}^\infty \int_0^{B(\xi)} H_\epsilon(x \cdot \xi+u - y)  \alpha(\xi,u) d y d u d \xi.
\end{equation}

\textbf{Step 2: accuracy of approximation.} 

\textit{Step 2a: preparation.} In the next steps, we will encounter many quantities that are defined in terms of $\alpha$ and $G$. Let us first introduce the notation
\begin{equation}
    \cG(\xi) = \abs{G(\xi)}+\abs{G(-\xi)} \quad \text{ and } \beta(\xi,u) = \mathrm{Re}[e^{-i u} G(\xi) + e^{i u} G(-\xi)]. 
\end{equation}
One can then calculate that 
\begin{equation}
    \partial_u \beta(\xi, u) = \mathrm{Re}[e^{-i (u-\pi/2)} G(\xi) + e^{i (u-\pi/2)} G(-\xi)]. 
\end{equation}
Recalling the definitions of $\alpha$ and $\beta$ we find that,
\begin{equation}\label{eq:abs-alpha-bound}
    \abs{\alpha(\xi,u)}\leq (\mathbbm{1}_{(-M \|\xi\|_1,0]}(u) + 4\mathbbm{1}_{[-1,1]}(u))\cG(\xi)
\end{equation}
and hence,
\begin{equation}
    \int_{\R^d}\int_{\R} \abs{\alpha(\xi,u)}dud\xi \leq \int_{\R^d}(M\norm{\xi}_1+8)\cG(\xi) d\xi \leq 9M\sqrt{d}\int_{\R^d}\max\{1,\norm{\xi}_2\}\cG(\xi) d\xi,
\end{equation}
and consequently, 
\begin{equation}\label{eq:1-17}
    \max\{\norm{\alpha(\xi, \cdot)}_{L^\infty(\R)},\norm{\partial_u\beta(\xi, \cdot)}_{L^\infty(\R)}\} \leq 5\cG(\xi).
\end{equation}
We also have that from Hölder's inequality,
\begin{equation}
    \int_{\R^d} \norm{\xi}_2^{k} \cG(\xi) d\xi  = \int_{\R^d} \frac{\norm{\xi}_2^{k} \cG(\xi)}{\sqrt{\pi_A(\xi)}}\cdot \sqrt{\pi_A(\xi)} d\xi \leq \left(\int_{\R^d}\frac{\norm{\xi}_2^{2k} \cG(\xi)^2}{{\pi_A(\xi)}}d\xi\right)^{\frac{1}{2}} \leq \cI_{2k}^*,
\end{equation}
where we define for any $\ell\in\N_0$ the constants
\begin{equation}
\begin{split}
	\label{eq:IfiniteLinfty-ell}
 \cI_\ell^* &:= \max_{k\in \{0, \ldots, \ell \}}\left(\int_{\R^d} \|\xi\|^k \frac{\cG(\xi)^2}{ \pi_A(\xi)} d \xi\right)^{\frac{1}{2}},\\
	\cI_\ell &:= \max_{k\in \{0, \ldots, \ell \}}\left(\int_{\R^d} \|\xi\|^k [ 1+\bar{F}(M\|\xi\|_1) +  \bar{F}(1)-\bar{F}(-1) ] \frac{\cG(\xi)^2}{ \pi_A(\xi)} d \xi\right)^{\frac{1}{2}}.
\end{split}
\end{equation}   

\textit{Step 2b: error introduced by $H\approx H_\epsilon$. }
We now investigate the accuracy of this approximation. First we observe that for $x>0$ it holds
\begin{equation}
    \abs{1-H_\epsilon(x)} = \frac{1}{2}(1-\sigma(x/\epsilon)) = \frac{1}{2} \frac{2\exp(-x/\epsilon)}{\exp(x/\epsilon)+\exp(-x/\epsilon)} \leq \exp(-x/\epsilon)
\end{equation}
and hence, by symmetry, also $\abs{H_\epsilon(x)}\leq \exp(x/\epsilon)$ for $x<0$. Using this we find that, 
\begin{equation}
    \begin{split}
        \abs{u(x)-u_\epsilon(x)} &\leq \int_{\R^d} \int_{- \infty}^\infty \int_0^{x \cdot \xi+u} \abs{H(y)-H_\epsilon(y)  }\abs{\alpha(\xi,u)} d y d u d \xi \\
        &\leq \int_{\R^d} \int_{- \infty}^\infty \int_0^{\infty} \exp(-x/\epsilon) \abs{\alpha(\xi,u)} d y d u d \xi \\
        &= \epsilon  \int_{\R^d} \int_{- \infty}^\infty  \abs{\alpha(\xi,u)} d u d \xi \\
        &\leq 9M\sqrt{d}{\cI_2}^*\epsilon . 
    \end{split}
\end{equation}
Next, we investigate the error in approximating the first derivative by choosing $\ell\in \{1, \ldots, d\}$ and calculating
\begin{equation}
    \begin{split}
        \abs{\partial_\ell(u(x)-u_\epsilon(x))} &\leq \int_{\R^d} \int_{- \infty}^\infty \abs{\xi_\ell} \abs{H(x \cdot \xi+u)-H_\epsilon(x \cdot \xi+u)  }\abs{\alpha(\xi,u)} d u d \xi \\
        &\leq \int_{\R^d} \abs{\xi_\ell} \norm{\alpha(\xi,\cdot)}_{L^\infty(\R)} \int_{- \infty}^\infty  \abs{H(u)-H_\epsilon(u)  } d u d \xi \\
&\leq 5 \int_{\R^d} \abs{\xi_\ell} \cG(\epsilon) \int_{- \infty}^\infty  \exp(-\abs{u}/\epsilon) d u d \xi \\
&= 10\epsilon \int_{\R^d} \norm{\xi}_2 \cG(\epsilon) d\xi \\
&\leq 10{\cI_2}^*\epsilon. 
    \end{split}
\end{equation}
These two calculations already tell us that
\begin{equation}
    \norm{u-u_\epsilon}_{W^{1,\infty}(B_M^d)} \leq 10M\sqrt{d}\cI_2^* \epsilon.
\end{equation}
Finally, we consider the second derivative for $j,\ell \in \{1, \ldots, d\}$. We first note that
\begin{equation}
\begin{split}
    \partial_j\partial_\ell u(x) &= \partial_j \int_{\R^d} \int_{-\infty}^\infty \xi_\ell H(x \cdot \xi+u)\alpha(\xi,u) du d\xi \\
&= \partial_j \int_{\R^d} \int_{-x\cdot\xi}^\infty \xi_\ell \alpha(\xi,u) du d\xi \\
    &= \int_{\R^d}  \xi_j \xi_\ell \alpha(\xi, -x\cdot \xi) d\xi. 
\end{split}
\end{equation}
Given that $\int_{-\infty}^\infty \abs{H_\epsilon'(x)} dx = 1$ 
we can write
\begin{equation}
    \begin{split}
\abs{\partial_j\partial_\ell(u(x)-u_\epsilon(x))} &\leq \int_{\R^d} \int_{- \infty}^\infty \abs{\xi_j \xi_\ell }\abs{H_\epsilon'(x \cdot \xi+u)  }\abs{\alpha(\xi,u)-\alpha(\xi,-x\cdot \xi)} d u d \xi. 
    \end{split}
\end{equation}
We will analyze this error by splitting the domain of integration into three parts. To create this subdivision we define the set
\begin{equation}
    A_{x,\delta} := \{\xi\in \R^d: \abs{x^*+x\cdot\xi}>\delta \text{ for }x^*±\in \{-1,0,1\}\}. 
\end{equation}
The intuition of this set is that all jump discontinuities in the map $\xi\mapsto \alpha(\xi, -x\cdot \xi)$ are contained in the complement of this set, as the map $u\mapsto \alpha(\xi,u)$ has jump continuities when $u\in\{-1,0,1\}$. Whenever $\xi\in A_{x,\delta}$, however, we can use Taylor's theorem to find a $\zeta(u,\xi,x)\in \R$ such that
\begin{equation}
    \abs{\alpha(\xi,u)-\alpha(\xi,-x\cdot \xi)} \leq \abs{\beta(\xi,u)-\beta(\xi,-x\cdot \xi)} \leq \abs{u+x\cdot \xi} \abs{\partial_u \beta (\xi, \zeta)}. 
\end{equation}
Without loss of generality we only consider $x^*=0$ in the definition of $A_{x,\delta}$, as the other cases are analogous. Under this assumption, and using \eqref{eq:1-17}, we see that
\begin{equation}
\begin{split}
    &\int_{\R^d\cap A_{x,\delta}} \int_{-x\cdot \xi -\delta}^{-x\cdot \xi +\delta}  \abs{\xi_j \xi_\ell } \abs{H_\epsilon'(x \cdot \xi+u)  }\abs{\alpha(\xi,u)-\alpha(\xi,-x\cdot \xi)} d u d \xi \\ 
    &\quad\leq \frac{\norm{\sigma'}_\infty}{\epsilon}\int_{\R^d}  \abs{\xi_j \xi_\ell } \int_{-x\cdot \xi -\delta}^{-x\cdot \xi +\delta}  \abs{u+x\cdot \xi} \abs{\partial_u \beta (\xi, \zeta)}  d u d \xi \\
    &\quad\leq \frac{\norm{\sigma'}_\infty}{\epsilon}\int_{\R^d}  \abs{\xi_j \xi_\ell } \norm{\partial_u\beta(\xi,\cdot)}_{L^\infty(\R)} \int_{-\delta}^\delta \abs{u} du d\xi \\
&\quad\leq \frac{5\delta^2\norm{\sigma'}_\infty}{\epsilon}\int_{\R^d} \norm{\xi}^2_2 \cG(\xi) d\xi \\
&\quad\leq \frac{5\delta^2\norm{\sigma'}_\infty}{\epsilon} {\cI_4}^*.
\end{split}
\end{equation}
Next we analyze the error on the complement of $A_{x,\delta}$ in the vicinity of $u = -x\cdot \xi$, still assuming wlog that $x^*=0$ is the only relevant case,
\begin{equation}
\begin{split}
    &\int_{\R^d\cap A^c_{x,\delta}} \int_{-x\cdot \xi -\delta}^{-x\cdot \xi +\delta} \abs{\xi_j \xi_\ell } \abs{H_\epsilon'(x \cdot \xi+u)  }\abs{\alpha(\xi,u)-\alpha(\xi,-x\cdot \xi)} d u d \xi \\ 
    &\quad\leq \frac{2\norm{\sigma'}_\infty}{\epsilon}\int_{\R^d\cap A^c_{x,\delta}} \abs{\xi_j \xi_\ell }  \norm{\alpha(\xi,\cdot)}_{L^\infty(\R)} \int_{-x\cdot \xi -\delta}^{-x\cdot \xi +\delta} d u d \xi \\
    &\quad= \frac{4\delta\norm{\sigma'}_\infty}{\epsilon}\cdot \int_{\R^d\cap A^c_{x,\delta}} \abs{\xi_j \xi_\ell }  \norm{\alpha(\xi,\cdot)}_{L^\infty(\R)}  d \xi =: (\star). 
\end{split}
\end{equation}
We now aim to provide an upper bound for the integral above. 
Using Hölder's inequality we find
\begin{equation}\label{eq:H2-2z}
    {\int_{\R^d\cap A^c_{x,\delta}} \abs{\xi_j \xi_\ell  }\norm{\alpha(\xi,\cdot)}_{L^\infty(\R)}  d \xi} \leq \left(\int_{\R^d\cap A^c_{x,\delta}} \pi_A^*(\norm{\xi}) d\xi\right)^{\frac{1}{2}} \left(\int_{\R^d} \frac{\norm{\xi}^4  \norm{\alpha(\xi,\cdot)}_{L^\infty(\R)}^2}{\pi_A^*(\norm{\xi})}  d \xi\right)^{\frac{1}{2}}. 
\end{equation}
From this, \eqref{eq:1-17} and \eqref{eq:IfiniteLinfty-ell} we find the elementary upper bound
\begin{equation}\label{eq:H2-2a}
    {\int_{\R^d\cap A^c_{x,\delta}} \abs{\xi_j \xi_\ell } \norm{\alpha(\xi,\cdot)}_{L^\infty(\R)}  d \xi} \leq \left(\int_{\R^d} \frac{\norm{\xi}^4  \norm{\alpha(\xi,\cdot)}_{L^\infty(\R)}^2}{\pi_A^*(\norm{\xi})}  d \xi\right)^{\frac{1}{2}} \leq 5\cI_4^*. 
\end{equation}
Next, we prove a second upper bound again using \eqref{eq:H2-2z}. 
We remark that (for fixed $x\neq 0$) the shortest distance between the two hyperplanes $x\cdot \xi = \pm \delta$ is $2\delta / \norm{x}_2$. For simplicity, and without loss of generality, we assume that the direction of $(\xi_1, 0, \ldots, 0)$ is perpendicular to the hyperplanes. In this case we find for $d\geq2$ that
\begin{equation}\label{eq:H2-2b}
\begin{split}
\int_{\R^d\cap A^c_{x,\delta}} \pi_A^*(\norm{\xi}) d\xi &\leq \frac{2\delta}{\norm{x}_2} \max_{\xi_1}\int_{\R^{d-1}} \pi_A^*(\norm{\xi}) d\xi \\&\leq \frac{2\delta}{\norm{x}_2} \int_{\R^{d-1}} \pi_A^*(\norm{\xi_{2:d}}) d\xi_{2:d}\\
&\leq \frac{2\omega_{d-1}\delta}{\norm{x}_2} \int_{0}^\infty \pi_A^*(r) r^{d-2}dr \\
&\leq \frac{2\omega_{d-1}\delta}{\norm{x}_2} \left[\int_{0}^1 \pi_A^*(r) r^{d-2}dr + \int_{1}^\infty \pi_A^*(r) r^{d-2}dr\right]\\
&\leq \frac{2\omega_{d-1}\delta}{\norm{x}_2} (\pi_A^*(0)+\omega_d^{-1}),
\end{split}
\end{equation}
where in the last line we used that
\begin{equation}
    \int_{1}^\infty \pi_A^*(r) r^{d-2}dr \leq \int_{0}^\infty \pi_A^*(r) r^{d-1}dr = \omega_d^{-1} \int_{\R^d} \pi_A(\xi) d\xi = \omega_d^{-1}.
\end{equation}
Note that these inequalities also hold for more any orientation of the hyperplanes $x\cdot \xi = \pm \delta$. Also note that for $d=1$ the inequality simply becomes
\begin{equation}
    \int_{\R^d\cap A^c_{x,\delta}} \pi_A^*(\norm{\xi}) d\xi \leq \frac{2\delta\pi_A^*(0)}{\norm{x}_2} \leq \frac{2\omega_{d-1}\delta}{\norm{x}_2} (\pi_A^*(0)+\omega_d^{-1}),
\end{equation}
as $\omega_0=1$. We can now combine the two versions of the upper bound (i.e. \eqref{eq:H2-2a} and \eqref{eq:H2-2z}+\eqref{eq:H2-2b}) to calculate its $L^2_\mu(B_M^d)$-norm. 
As a preparatory calculation, and assuming that $\delta<\frac{M}{e}$, we find for $d=1$ that
\begin{equation}
   \frac{1}{2M}\int_{-M}^M \min\left\{1,\frac{1}{\norm{x}}\right\} dx \leq \frac{1}{M}\int_0^\delta  dr + \frac{1}{M} \int_\delta^M \frac{1}{r} dr \leq \frac{2}{M} \ln(\frac{M}{\delta}).
\end{equation}
and for $d\geq 2$ we find that
\begin{equation}
    \frac{1}{\omega_dM^d}\int_{B_M^d} \min\left\{1,\frac{1}{\norm{x}}\right\} dx \leq \frac{1}{M^d}\int_0^M \frac{r^{d-1}}{r} dr \leq \frac{1}{M(d-1)}\leq \frac{2}{M} \ln(\frac{M}{\delta}). 
\end{equation}
Hence, combining all ingredients and using $\omega_{d-1}(\pi_A^*(0)+\omega_d^{-1})\leq6\pi_A(0)+5d$, still assuming that $\delta<\frac{M}{e}$, we find that
\begin{equation}
    \begin{split}
        \left(\int_{B_M^d}(\star)^2 d\mu(x)\right)^{1/2} &\leq \frac{5\cI_4^* \cdot 4\norm{\sigma'}_\infty\delta}{\epsilon}  \left( \frac{2\omega_{d-1}\delta(\pi_A^*(0)+\omega_d^{-1})\cdot 2}{M} \ln(\frac{M}{\delta}) \right)^{\frac{1}{2}} \\
        &\leq \frac{80\cI_4^* \delta^{3/2}}{\epsilon} (6\pi_A(0)+5d)  \ln(\frac{1}{\delta}), 
    \end{split}
\end{equation}
where we used that if $M\geq 1$ and $\delta<\frac{M}{e}$ then
\begin{equation}
    \frac{2}{M} \ln(\frac{M}{\delta}) = \frac{2}{M} \ln(M) + \frac{2}{M} \ln(\frac{1}{\delta}) \leq 2+2 \ln(\frac{1}{\delta}) \leq 4 \ln(\frac{1}{\delta}).
\end{equation}
Finally, using amongst others \eqref{eq:1-17}, we analyze the error on the remaining subdomain: 
\begin{equation}
    \begin{split}
        &\int_{\R^d} \int_{\R\setminus[-x\cdot \xi \pm\delta]} \abs{\xi_j \xi_\ell }\abs{H_\epsilon'(x \cdot \xi+u)  }\abs{\alpha(\xi,u)-\alpha(\xi,-x\cdot \xi)} d u d \xi \\ 
        &\quad \leq 4 \int_{\R^d} \abs{\xi_j \xi_\ell}\norm{\alpha(\xi,\cdot)}_{L^\infty(\R)} \int_\delta^\infty  \abs{H_\epsilon'(u)  } du d\xi \\
        &\quad \leq 4 (1-H_\epsilon(\delta)) \int_{\R^d} \abs{\xi_j \xi_\ell}\norm{\alpha(\xi,\cdot)}_{L^\infty(\R)}    d\xi \\
        &\quad \leq 20 \exp(-\delta/\epsilon) \int_{\R^d} \norm{\xi}^2_2 \cG(\xi)    d\xi \\
        &\quad \leq 20 \exp(-\delta/\epsilon) {\cI_4^*}. 
    \end{split}
\end{equation}
Hence, we can combine all the previous to
\begin{equation}
    \norm{\partial_j\partial_\ell(u-u_\epsilon)}_{L^2_\mu(B_M^d)} \leq 81(6\pi_A(0)+5d)\cI_4^*\left[\frac{\delta^{3/2}}{\epsilon}\ln(\frac{1}{\delta})+\exp(-\delta/\epsilon)\right]
\end{equation}
We combine this inequality with the previously proven $W^{1,\infty}$-bound and use the inequality $\sqrt{1+d+d(1+d)/2}\leq \sqrt{3}d$ to tackle the sum inside the definition of the $H^2$-norm to find
\begin{equation}
    \begin{split}
\norm{u-u_\epsilon}_{H^2_\mu(B_M^d)} &\leq \sqrt{3}d\cdot 81(6\pi_A(0)+5d)M\cI_4^*\left[\epsilon + \frac{\delta^{3/2}}{\epsilon}\ln(\frac{1}{\delta})+\exp(-\delta/\epsilon)\right] \\
&\leq C_\gamma d(\pi_A(0)+d)M\cI_4^*\epsilon^\gamma,
    \end{split}
\end{equation}
where we $\delta = \epsilon^{\frac{2}{3}(1+\gamma)}$ with $\gamma\in (0,1/2)$ to recover the second inequality. 
\end{proof}

It is clear that if we can discretize the integrals in the definition of $u_\epsilon$ in an appropriate manner, then we have proven the existence of randomized neural networks that approximate $u$ well. We will use the following lemma that quantifies the accuracy of such a Monte Carlo-type approximation. 

\begin{lemma}\label{lem:MC}
Let $p\in[2,\infty)$, $q,m\in\mathbb{N}$, let $(\Omega, \mathcal{F}, \mathcal{P})$ and $(\mathcal{D}, \mathcal{A}, \mu)$ be probability spaces, and let for every $q\in\mathcal{D}$ the maps $X_i^q:\Omega\to\mathbb{R}, i\in\{1,\ldots, m\}$, be i.i.d. random variables with $\E{\abs{X^q_1}}<\infty$. Then it holds that
\begin{equation}\label{eq:MC-formula}
    \E{\left(\int_{\mathcal{D}}\abs{\E{X^q_1}-\frac{1}{m}\sum_{i=1}^m X^q_i}^p\mu(dq)\right)^{1/p}} \leq 2\sqrt{\frac{p-1}{m}} \left(\int_{\mathcal{D}}\E{\abs{\E{X^q_1}-X_1^q}^p}\mu(dq)\right)^{1/p}.
\end{equation}
\end{lemma}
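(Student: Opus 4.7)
The plan is to reduce the claim to a pointwise-in-$q$ moment bound for a centered, normalized sum of i.i.d.\ random variables, then obtain that bound via a symmetrization argument combined with Khintchine's inequality for Rademacher averages.

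First, I would apply Jensen's inequality to the concave map $t\mapsto t^{1/p}$ (valid since $p\geq 1$) to pull the outer expectation inside the $L^p(\mu)$-norm, and then use Fubini to swap the expectation with the integral over $\mathcal{D}$. Writing $Z^q:=\mathbb{E}[X_1^q]-\tfrac{1}{m}\sum_{i=1}^m X_i^q$, this reduces the claim to establishing, for each fixed $q\in\mathcal{D}$, the pointwise bound
\begin{equation*}
\mathbb{E}\bigl[\abs{Z^q}^p\bigr]\leq 2^p\Bigl(\tfrac{p-1}{m}\Bigr)^{p/2}\,\mathbb{E}\bigl[\abs{\mathbb{E}[X_1^q]-X_1^q}^p\bigr].
\end{equation*}

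Next, for fixed $q$ I would symmetrize by introducing an independent copy $(\widetilde{X}_i^q)_{i=1}^m$ of $(X_i^q)_{i=1}^m$. Since $Z^q=\mathbb{E}\bigl[\tfrac{1}{m}\sum_i(\widetilde{X}_i^q-X_i^q)\,\big|\,(X_j^q)_j\bigr]$, Jensen's inequality gives $\mathbb{E}[\abs{Z^q}^p]\leq\mathbb{E}\abs{\tfrac{1}{m}\sum_i(\widetilde{X}_i^q-X_i^q)}^p$. Each difference $\widetilde{X}_i^q-X_i^q$ is symmetric, so inserting independent Rademacher signs $\varepsilon_i$ preserves the joint distribution; conditioning on $X,\widetilde{X}$ and invoking Khintchine's inequality with constant $\sqrt{p-1}$ (valid for $p\geq 2$, e.g.\ via Burkholder--Davis--Gundy for conditionally symmetric martingales) yields
\begin{equation*}
\mathbb{E}_\varepsilon\Bigl[\Bigl|\sum_{i=1}^m \varepsilon_i a_i\Bigr|^p\Bigr]\leq (p-1)^{p/2}\Bigl(\sum_{i=1}^m a_i^2\Bigr)^{p/2}
\end{equation*}
with $a_i=\widetilde{X}_i^q-X_i^q$. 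Taking the outer expectation and applying Minkowski's inequality in $L^{p/2}$ (a genuine norm since $p\geq 2$) to the sum of squares then gives, using the i.i.d.\ assumption, $\mathbb{E}\bigl[(\sum_i(\widetilde{X}_i^q-X_i^q)^2)^{p/2}\bigr]\leq m^{p/2}\,\mathbb{E}\abs{\widetilde{X}_1^q-X_1^q}^p$.

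Finally, the triangle inequality in $L^p(\Omega)$ applied to the decomposition $\widetilde{X}_1^q-X_1^q=(\widetilde{X}_1^q-\mathbb{E}[X_1^q])-(X_1^q-\mathbb{E}[X_1^q])$ contributes the factor $2$ in the final constant, namely $(\mathbb{E}\abs{\widetilde{X}_1^q-X_1^q}^p)^{1/p}\leq 2\,(\mathbb{E}\abs{X_1^q-\mathbb{E}[X_1^q]}^p)^{1/p}$. Collecting these estimates, taking $p$-th roots, and integrating back over $\mathcal{D}$ yields the claim. The only nonroutine ingredient is the Khintchine bound with constant $\sqrt{p-1}$; beyond that, the argument is a chain of Jensen, Fubini, Minkowski, and the triangle inequality. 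The conceptual obstacle, modest as it is, is the recognition that symmetrization turns the i.i.d.\ but possibly non-symmetric differences $X_i^q-\mathbb{E}[X_i^q]$ into Rademacher-type sums to which Khintchine applies with the sharp order-$\sqrt{p}$ constant; a direct application of Burkholder to the unsymmetrized sum would only deliver the weaker constant $p-1$.
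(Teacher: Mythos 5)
Your proof is correct and follows the same structure as the paper's: Jensen (equivalently, Hölder) and Fubini reduce the claim to the pointwise-in-$q$ moment bound for a centered i.i.d.\ average, which the paper simply cites as Corollary 2.5 of Grohs et al.\ and then integrates over $\mathcal{D}$. Your symmetrization-plus-Khintchine derivation of that pointwise bound, with the $\sqrt{p-1}$ Khintchine constant, the $m^{p/2}$ from Minkowski in $L^{p/2}$, and the factor of $2$ from desymmetrizing via the $L^p$ triangle inequality, reproduces the stated constant $2\sqrt{(p-1)/m}$ exactly and is the standard argument behind the cited result.
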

\begin{proof}
The proof involves Hölder's inequality, Fubini's theorem and \cite[Corollary 2.5]{grohs2018proof}. The calculation is as in \cite[eq. (226)]{grohs2018proof}.
\end{proof}

Using this lemma, we can prove that \emph{generalized} randomized neural networks \eqref{def:rfm-1} can approximate functions of the form \eqref{eq:Hrepresentation2} well in Sobolev norm. In particular, the convergence rate in terms of the neural network width $N$ is dimension-independent and the curse of dimensionality is fully overcome as long as the constant $\cI_4$ \eqref{eq:IfiniteLinfty} does not depend exponentially on $d$. 

\begin{theorem}\label{thm:approx-general}
Let $u \colon \R^d \to \R$, let $M >0$ and assume there exists $G \colon \R^d \to \mC$ such that 
	\begin{equation}
	u(x) = \int_{\R^d} e^{i x \cdot \xi} G(\xi) d\xi
	\end{equation}
	for all $x \in [-M,M]^d$. Define
	\begin{equation}
	\label{eq:IfiniteLinfty}
	\cI_\ell := \max_{k\in \{0, \ldots, \ell \}}\left(\int_{\R^d} \|\xi\|^k [ 1+\bar{F}(M\|\xi\|_1) +  \bar{F}(1)-\bar{F}(-1) ] \frac{(\abs{G(\xi)}+\abs{G(-\xi)})^2}{ \pi_A(\xi)} d \xi\right)^{\frac{1}{2}}. 
	\end{equation}   
If Assumptions \ref{ass:dist-1}, \ref{ass:piA-radial} and \ref{ass:Fbar} are satisfied then for any $\gamma\in(0,1/2)$ and $N\in\N$ there exist a constant $C_\gamma$ and an $\R^N$-valued, $\sigma(A,B,Y)$-measurable random vector $W$ such that
 \begin{equation}
     \E{\norm{u-U^{A,B,Y}_W}_{H^1_\mu(B_M^d)}} \leq \frac{100Md\cI_{2}}{N^{1/4}} \quad \text{and}\quad \E{\norm{u-U^{A,B,Y}_W}_{H^2_\mu(B_M^d)}} \leq \frac{ C_\gamma d(\pi_A(0)+d)M\cI_4}{N^{\frac{\gamma}{2(2+\gamma)}}}.
 \end{equation}
\end{theorem}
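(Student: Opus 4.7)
The plan is to combine the smoothed approximation $u_\epsilon$ from Proposition \ref{prop:general} with a Monte Carlo discretization of its integral representation, where the samples used in the discretization are precisely the hidden weights $(A_i, B_i, Y_i)$ specified by Assumption \ref{ass:dist-1}. The Monte Carlo error is then controlled by Lemma \ref{lem:MC}, and the final rates follow from optimizing the resulting sum of a bias term (in $\epsilon$) and a variance term (in $\epsilon^{-k}N^{-1/2}$) over the smoothing parameter $\epsilon$.

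First, I would rewrite $u_\epsilon$ as an expectation. Starting from the equivalent form derived in the proof of Proposition \ref{prop:general},
\[
u_\epsilon(x) = \int_{\R^d}\int_{\R}\int_0^{B(\xi)} H_\epsilon(x\cdot\xi + u - y)\,\alpha(\xi,u)\,dy\,du\,d\xi,\qquad B(\xi) = 2M\norm{\xi}_1 + 1,
\]
I would importance-sample against the joint law of $(A,B,Y)$ in Assumption \ref{ass:dist-1}, whose joint density is $\pi_A(A)\pi_B(B)\1_{[0,B(A)]}(Y)/B(A)$. After the change of variables $(\xi,u,y)\to\epsilon(A,B,Y)$ needed to turn $\sigma(\cdot/\epsilon)$ inside $H_\epsilon(\cdot) = \tfrac{1}{2}(1+\sigma(\cdot/\epsilon))$ into the network activation $\sigma(\cdot)$, together with the matching rescaling of the densities, one obtains
\[
u_\epsilon(x) = \E{F(A,B,Y;x)},\qquad F(A,B,Y;x) := \tfrac{1}{2}\tfrac{\alpha(A,B)\,B(A)}{\pi_A(A)\pi_B(B)}\bigl(1+\sigma(A\cdot x + B - Y)\bigr).
\]
The Monte Carlo estimator $\tfrac{1}{N}\sum_i F(A_i,B_i,Y_i;x)$ then coincides with $U^{A,B,Y}_W(x)$ for the explicit, $\sigma(A,B,Y)$-measurable choice $W_0 = \tfrac{1}{2N}\sum_i \tfrac{\alpha(A_i,B_i)B(A_i)}{\pi_A(A_i)\pi_B(B_i)}$, $W_i = \tfrac{1}{2N}\tfrac{\alpha(A_i,B_i)B(A_i)}{\pi_A(A_i)\pi_B(B_i)}$.

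Next I would apply Lemma \ref{lem:MC} with $p=2$ and measure $\mu$ to each mixed partial $\partial^\gamma F(A,B,Y;\cdot)$ with $\abs{\gamma}\le 2$. Differentiating $H_\epsilon$ contributes a factor $A^\gamma H_\epsilon^{(\abs{\gamma})}(\cdot)$, and the bounds $\norm{H_\epsilon^{(k)}}_\infty \le C\epsilon^{-k}$ together with $\norm{H_\epsilon}_\infty \le 1$ reduce the central estimate to the second moment
\[
\int_{B_M^d}\E{\abs{\partial^\gamma F(A,B,Y;x)}^2}d\mu(x) \lesssim \frac{1}{\epsilon^{2\abs{\gamma}}} \int_{\R^d}\frac{\norm{\xi}^{2\abs{\gamma}} B(\xi)}{\pi_A(\xi)}\int_\R \frac{\alpha(\xi,u)^2}{\pi_B(u)}\,du\,d\xi.
\]
The inner $u$-integration, using the indicator structure of $\alpha(\xi,\cdot)$ supported on $(-M\norm{\xi}_1, 0]\cup[-1,1]$ and the definition of $\bar F$, produces exactly the factor $\cG(\xi)^2[1+\bar F(M\norm{\xi}_1)+\bar F(1)-\bar F(-1)]$ appearing inside \eqref{eq:IfiniteLinfty}, so that the remaining $\xi$-integration yields a bound of the form $\cI_{2\abs{\gamma}}^2$. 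Lemma \ref{lem:MC} then delivers
\[
\E{\norm{\partial^\gamma(u_\epsilon - U^{A,B,Y}_W)}_{L^2_\mu(B_M^d)}} \le \frac{C\,\cI_{2\abs{\gamma}}}{\epsilon^{\abs{\gamma}}\sqrt N},
\]
and summing over $\abs{\gamma}\le 1$ (resp. $\abs{\gamma}\le 2$) by the triangle inequality $\norm{\nabla f}_{L^2_\mu} \le \sum_\ell \norm{\partial_\ell f}_{L^2_\mu}$ contributes only a polynomial-in-$d$ prefactor.

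Finally I would invoke $\norm{u-U^{A,B,Y}_W}_{H^k_\mu} \le \norm{u-u_\epsilon}_{H^k_\mu} + \norm{u_\epsilon - U^{A,B,Y}_W}_{H^k_\mu}$ and optimize over $\epsilon$. For $k=1$, the bias from Proposition \ref{prop:general} is of order $M\sqrt d\,\cI_2\epsilon$ and the variance of order $d\,\cI_2/(\epsilon\sqrt N)$; the choice $\epsilon\sim N^{-1/4}$ balances these and yields the claimed $N^{-1/4}$ rate. For $k=2$, the bias is of order $d(\pi_A(0)+d)M\cI_4\epsilon^\gamma$ and the variance of order $d^2\cI_4/(\epsilon^2\sqrt N)$; the optimum $\epsilon\sim N^{-1/(2(2+\gamma))}$ yields the rate $N^{-\gamma/(2(2+\gamma))}$. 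The main obstacle is the second-moment computation in the MC step: isolating the correct dependence on $1/\pi_B$ (which must collapse exactly into the weight factor $1+\bar F(M\norm{\xi}_1)+\bar F(1)-\bar F(-1)$ appearing in $\cI_\ell$) and on $1/\pi_A$ weighted by powers of $\norm{\xi}$, so that everything collapses into $\cI_{2\abs{\gamma}}^2$ with dimension dependence no worse than polynomial. A secondary technicality is reconciling the internal $1/\epsilon$-scaling of $H_\epsilon$ with the unscaled activation $\sigma(A\cdot x+B-Y)$ of the network, which is handled by the change of variables $(\xi,u,y)\to\epsilon(A,B,Y)$ mentioned above together with the corresponding adjustment of the sampling densities.
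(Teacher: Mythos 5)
Your proposal follows essentially the same architecture as the paper's proof: rewrite $u_\epsilon$ from Proposition~\ref{prop:general} as an expectation over $(A,B,Y)$ by importance sampling against the densities of Assumption~\ref{ass:dist-1}, invoke Lemma~\ref{lem:MC} to convert the Monte Carlo error into a second-moment bound, show that the moment collapses into $\cI_{2(\ell+1)}$, and optimize over $\epsilon$ with $\epsilon\sim N^{-1/4}$ for $H^1$ and $\epsilon\sim N^{-1/(2(2+\gamma))}$ for $H^2$. This is exactly the paper's route, and the rates you obtain match.

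The one place you deviate is the change of variables $(\xi,u,y)\to\epsilon(A,B,Y)$ that you introduce in order to replace the scaled argument $\sigma(\cdot/\epsilon)$ inside $H_\epsilon$ by the unscaled $\sigma(A\cdot x+B-Y)$ appearing in $U_W^{A,B,Y}$, and that step does not go through as written. After the substitution the inner integral runs over $Y\in[0,\,B(\epsilon A)/\epsilon]$ and $B(\epsilon A)/\epsilon = 2M\norm{A}_1 + 1/\epsilon$; for $\epsilon<1$ this strictly exceeds the support $[0,\,2M\norm{A}_1+1]$ of the uniform law prescribed for $Y_1$ in Assumption~\ref{ass:dist-1}, so the importance-sampling representation $u_\epsilon = \E{F(A,B,Y;\cdot)}$ is no longer valid on the excess region. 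The explicit formula you give for $F$ is also internally inconsistent with the substitution: you keep $\alpha(A,B)$ and $\pi_A(A)\pi_B(B)$ with unrescaled arguments while the activation argument is unrescaled, whereas the substitution would require $\alpha(\epsilon A,\epsilon B)$, the rescaled densities, and a Jacobian factor $\epsilon^{d+2}$. The paper avoids this specific trap by not attempting the substitution at all: it takes $X_i(x)=H_\epsilon(x\cdot A_i+B_i-Y_i)f(A_i,B_i)$ directly with the $\sigma(\cdot/\epsilon)$ retained, and never reconciles this with the unscaled $\sigma$ in definition~\eqref{def:rfm-1}. So the mismatch you flagged is a genuine loose end in the paper as well, but the change-of-variables fix you propose replaces one inconsistency with another rather than resolving it.
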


\begin{proof}
First we note that the approximation $u_\epsilon$ from Proposition \ref{prop:general}, given by
\begin{equation}
    u_\epsilon(x) = \int_{\R^d} \int_{- \infty}^\infty \int_0^{B(\xi)} H_\epsilon(x \cdot \xi+u - y)  \alpha(\xi,u) d y d u d \xi.
\end{equation}
can be rewritten as
\begin{equation}
     u_\epsilon(x) = \mathbb{E}_{A,B,Y}\left[ H_\epsilon(x\cdot A + B - Y ) f(A,B)\right], \quad \text{where } f(\xi,u) = \frac{B(\xi)\alpha(\xi,u)}{\pi_A(\xi)\pi_B(u)}.
\end{equation}
If we define the random function $X$ as $X(x) := H_\epsilon(x\cdot A + B - Y ) f(A,B)$ then a straightforward approximation of $u(x)$ is given by $\sum_{i=1}^N X_i(x)$, where $X_1, \ldots, X_N$ are $N$ iid realizations of $X$. 

Next, letting $\bmalpha\in\N_0^d$ with $\ell := \norm{\bmalpha}_1 \leq 2$, setting $D:=B_M^d$ and invoking Lemma \ref{lem:MC} brings us that
\begin{equation}\label{eq:1-48}
\begin{split}
    \E{\norm{D^\bmalpha u_\epsilon - D^\bmalpha\frac{1}{N}\sum_{i=1}^N X_i}_{L^2_\mu(D)}} &\leq \frac{2}{\sqrt{N}} \left(\int_{D}\E{\abs{D^\bmalpha(u_\epsilon-X_1)}^2}\mu(dx)\right)^{1/2}\\ &\leq \frac{2}{\sqrt{N}} \left(\int_{D}\E{\abs{D^\bmalpha X_1}^2}\mu(dx)\right)^{1/2}. 
\end{split}
\end{equation}
We can estimate the integrand of the right-hand side as
\begin{equation}\label{eq:1-49}
    \begin{split}
        \E{\abs{D^\bmalpha X_1}^2} &= \E{\abs{D^\bmalpha H_\epsilon(x\cdot \xi + u - y ) f(\xi, u) }^2} \leq \frac{\|{\sigma^{(\ell)}}\|_\infty^2}{\epsilon^{2\ell}} \E{\abs{\norm{\xi}_2^\ell f(\xi, u) }^2}. 
    \end{split}
\end{equation}
We can further compute an upper bound as
\begin{equation}
\begin{split}
    \E{\abs{\norm{\xi}_2^\ell f(\xi, u) }^2} &=\int_{\R^d}\int_{\R} \norm{\xi}_2^{2\ell}\abs{f(\xi,u)}^2 \pi_A(\xi)\pi_B(u)dud\xi \\
&= \int_{\R^d}\int_{\R} \norm{\xi}_2^{2\ell} (\mathbbm{1}_{(-M \|\xi\|_1,0]}(u) + 4\mathbbm{1}_{[-1,1]}(u))^2 \frac{\cG(\xi)^2B(\xi)^2}{\pi_A(\xi)\pi_B(u)}dud\xi \\ 
&\leq 2\int_{\R^d}\int_{\R} \norm{\xi}_2^{2\ell} (\mathbbm{1}_{(-M \|\xi\|_1,0]}(u) + 16\mathbbm{1}_{[-1,1]}(u))\frac{\cG(\xi)^2(2M\norm{\xi}_1+1)^2}{\pi_A(\xi)\pi_B(u)}dud\xi \\
&\leq  2\cdot 16\cdot 9M^2d\max_{0\leq k \leq 2(\ell+1)} \int_{\R^d} \norm{\xi}_2^{k} (\bar{F}(dM\norm{\xi}_2) + \bar{F}(1)-\bar{F}(-1))\frac{\cG(\xi)^2}{\pi_A(\xi)} d\xi \\
&\leq 288M^2d \cI_{2(\ell+1)}^2, 
\end{split}
\end{equation}
where we used
\begin{equation}\label{eq:Bxi-bound}
    B(\xi)^2 \leq (2M\norm{\xi}_1+1)^2 \leq (2M\sqrt{d}\norm{\xi}_2+M\sqrt{d})^2.\leq 9M^2d\max\{\norm{\xi}_2,1\}^2
\end{equation}
The total error contributed to the Monte Carlo approximation can then be quantified as
\begin{equation}
    \E{\norm{D^\bmalpha u_\epsilon - D^\bmalpha\frac{1}{N}\sum_{i=1}^N X_i}_{L^2_\mu(D)}} \leq \frac{34M\sqrt{d}\cI_{2(\ell+1)}}{\epsilon^\ell\sqrt{N}}. 
\end{equation}
Combining this with Proposition \ref{prop:general} gives
\begin{equation}
    \E{\norm{u - \frac{1}{N}\sum_{i=1}^N X_i}_{H^1_\mu(D)}} \leq \frac{100\sqrt{d}}{68}\cdot 34M\sqrt{d}\cI_{2}\left[\epsilon + \frac{1}{\epsilon\sqrt{N}}\right] = \frac{100Md\cI_{2}}{N^{1/4}},
\end{equation}
where we set $\epsilon = N^{-1/4}$ and used that $\sqrt{1+d}\leq 100d/68$ to tackle the sum inside the definition of the $H^1$-norm. Similarly, we use that $\sqrt{1+d+d(1+d)/2}\leq \sqrt{3}d$ and Proposition \ref{prop:general} to find
\begin{equation}
\begin{split}
    \E{\norm{u - \frac{1}{N}\sum_{i=1}^N X_i}_{H^2_\mu(D)}} &\leq  C_\gamma d(\pi_A(0)+d)M\cI_4\left[\epsilon^\gamma + \frac{1}{\epsilon^2\sqrt{N}}\right] \leq \frac{ C_\gamma d(\pi_A(0)+d)M\cI_4}{N^{\frac{\gamma}{2(2+\gamma)}}},
\end{split}
\end{equation}
where we set $\epsilon = N^{-\frac{1}{2(2+\gamma)}}$. This concludes the proof of the result. 
\end{proof}

\subsection{Approximating Sobolev functions with randomized neural networks with uniformly distributed weights}\label{sec:approx2}

With Theorem \ref{thm:approx-general} we have a presented a very general approximation results for a slightly modified definition of randomized neural networks. In this section, we will adapt this result in the following manner: 
\begin{itemize}
    \item rather than using \emph{generalized} randomized neural networks \eqref{def:rfm-1}, we will use the default definition of randomized neural networks, namely
\begin{equation}\label{def:rfm-2}
    U^{A,B}_W(x) = W_0 + \sum_{i=1}^N W_i \sigma(A_i\cdot x+B_i);
\end{equation}
\item we will formulate approximation results for Sobolev functions, rather than functions that satisfy the somewhat intransparent condition \eqref{eq:Hrepresentation2};
\item we replace the general probability distribution that were subject to Assumptions \ref{ass:dist-1}, \ref{ass:piA-radial} and \ref{ass:Fbar} simply by uniform probability distributions. 
\end{itemize}
The final bullet point can be formalized as follows:
\begin{assumption}
The random variables $A_i$ and $B_i$ ($i=1\ldots,N$) are iid and satisfy the following requirements for some $M,R\geq 1$: 
\begin{itemize}
	\item the distribution $\Tilde{\pi_A}$ of $A_1$ is uniform on $B_R^d$ and
	\item the distribution $\Tilde{\pi_B}$ of $B_1$ is uniform on the interval $[-3M\sqrt{d}R-1,2]$. 
\end{itemize}
\end{assumption}
With the above goals in mind, we first prove an auxiliary result that gives an upper bound on the constants $\cI_k$ \eqref{eq:IfiniteLinfty} of Theorem \ref{thm:approx-general} in terms of the Sobolev norm of the considered function. 
\begin{lemma}\label{lem:sobolev}
Let $u$ be as in Theorem \ref{thm:approx-general} and let
$s>d/2$, $R\in[1,\infty)$ and $k\in\N$. If $u\in H^{k+s}(\R^d)$ then it holds that
    \begin{equation}
\left(\int_{\R^d\setminus B_R^d} \norm{\xi}^k_2  (\abs{G(\xi)}+\abs{G(-\xi)}) d\xi \right)^2\leq  \frac{4\omega_d \norm{u}_{H^{k+s}}^2}{(2\pi)^d(2s-d)R^{2s-d}}. 
    \end{equation}    
\end{lemma}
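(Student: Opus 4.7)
The plan is to apply Cauchy--Schwarz after splitting the weight $\|\xi\|^k$ into a factor that is controlled by the Sobolev seminorm of $u$ and a tail factor $\|\xi\|^{-s}$ whose $L^2$-integral over $\R^d\setminus B_R^d$ is explicitly computable in polar coordinates. The hypothesis $s>d/2$ is exactly what makes this tail integral finite.

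First I would observe that the given representation $u(x)=\int_{\R^d} e^{ix\cdot\xi}G(\xi)\,d\xi$ identifies $G$, up to the factor $(2\pi)^d$, as the Fourier transform of $u$. More precisely, under the convention $\hat u(\xi)=\int e^{-ix\cdot\xi}u(x)\,dx$ one has $\hat u(\xi)=(2\pi)^d G(\xi)$, so Plancherel's identity and the standard definition of the Sobolev norm yield
\begin{equation*}
\int_{\R^d}(1+\|\xi\|^2)^{k+s}\,|G(\xi)|^2\,d\xi \;=\; (2\pi)^{-d}\,\|u\|_{H^{k+s}}^2.
\end{equation*}
By the change of variable $\xi\mapsto-\xi$ the same identity holds for $|G(-\xi)|^2$, so after applying $(a+b)^2\le 2(a^2+b^2)$ the integral
$\int (1+\|\xi\|^2)^{k+s}(|G(\xi)|+|G(-\xi)|)^2\,d\xi$ is bounded by $4(2\pi)^{-d}\|u\|_{H^{k+s}}^2$.

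Next I would write $\|\xi\|^k=\|\xi\|^{k+s}\cdot\|\xi\|^{-s}$ and apply the Cauchy--Schwarz inequality to the integral over $\R^d\setminus B_R^d$, obtaining
\begin{equation*}
\Bigl(\int_{\R^d\setminus B_R^d}\!\!\|\xi\|^k(|G(\xi)|+|G(-\xi)|)\,d\xi\Bigr)^{\!2}
\le \Bigl(\int_{\R^d\setminus B_R^d}\!\!\|\xi\|^{-2s}\,d\xi\Bigr)\Bigl(\int_{\R^d\setminus B_R^d}\!\!\|\xi\|^{2(k+s)}(|G(\xi)|+|G(-\xi)|)^2\,d\xi\Bigr).
\end{equation*}
On the set $\{\|\xi\|\ge R\ge 1\}$ one has $\|\xi\|^{2(k+s)}\le (1+\|\xi\|^2)^{k+s}$, so the second factor is bounded by $4(2\pi)^{-d}\|u\|_{H^{k+s}}^2$ by the previous step. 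The first factor is evaluated in spherical coordinates:
\begin{equation*}
\int_{\R^d\setminus B_R^d}\|\xi\|^{-2s}\,d\xi \;=\; \omega_d\int_R^\infty r^{d-1-2s}\,dr \;=\; \frac{\omega_d}{2s-d}\,R^{d-2s},
\end{equation*}
which is finite precisely because $2s>d$. Multiplying the two bounds gives the stated inequality with the correct constants.

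The only subtle step is the bookkeeping of the Fourier convention to recover the factor $(2\pi)^{-d}$ in front; everything else is an application of Cauchy--Schwarz, a pointwise elementary inequality, and an explicit radial integral, so I do not expect any real difficulty beyond making sure conventions are consistent with those used implicitly elsewhere in the paper.
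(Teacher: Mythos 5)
Your proof is correct and follows essentially the same route as the paper's: relate $G$ to $\hat u$, apply Cauchy--Schwarz so that one factor is a Sobolev-weighted integral bounded by $\|u\|_{H^{k+s}}^2$ and the other is the explicit tail integral $\int_{\|\xi\|\ge R}\|\xi\|^{-2s}d\xi$. The only cosmetic differences are that the paper inserts the weight $(1+\|\xi\|^2)^{\pm s/2}$ rather than $\|\xi\|^{\pm s}$ before applying Cauchy--Schwarz, and that it obtains the factor $4$ from $|G(\xi)|+|G(-\xi)|=2|G(\xi)|$ (using that $u$ is real) instead of from $(a+b)^2\le 2(a^2+b^2)$; both yield the identical bound.
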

\begin{proof}
First, we notice that
\begin{equation}
    \int_{\R^d\setminus B_R^d} \norm{\xi}^k_2  (\abs{G(\xi)}+\abs{G(-\xi)}) d\xi = \frac{2}{(2\pi)^{\frac{d}{2}}} \int_{\R^d\setminus B_R^d} \norm{\xi}^k_2  \abs{\hat{u}(\xi)} d\xi. 
\end{equation}
In addition, we can calculate that
\begin{equation}
        \begin{split}
\left(\int_{\R^d\setminus B_R^d} \norm{\xi}^k_2  \abs{\hat{u}(\xi)} d\xi \right)^2&= \left(\int_{\R^d\setminus B_R^d} \norm{\xi}^k_2 \frac{(1+\norm{\xi}^2_2)^{s/2}}{(1+\norm{\xi}^2_2)^{s/2}} \abs{\hat{u}(\xi)} d\xi \right)^2 \\
&\leq \int_{\R^d\setminus B_R^d}(1+\norm{\xi}^2_2)^{-s} d\xi \cdot  \int_{\R^d\setminus B_R^d}(1+\norm{\xi}^2_2)^{s} \norm{\xi}^{2k}_2 \abs{\hat{u}(\xi)}^2 d\xi \\
&\leq \int_{\R^d\setminus B_R^d}\norm{\xi}^{-2s} d\xi \cdot  \int_{\R^d\setminus B_R^d}(1+\norm{\xi}^2_2)^{k+s}\abs{\hat{u}(\xi)}^2 d\xi \\
& = \omega_d \norm{u}_{H^{k+s}}^2 \int_R^\infty r^{-2s+d-1} dr = \omega_d \norm{u}_{H^{k+s}}^2 \frac{R^{-2s+d}}{2s-d}. 
        \end{split}
    \end{equation}
\end{proof}

We can now present the main result of the section: we use Theorem \ref{thm:approx-general} to prove that Sobolev functions of sufficient regularity can be approximated by default randomized neural networks with uniformly distributed weights can be well approximated in $H^1$-norm and $H^2$-norm. In addition, the curse of dimensionality can be fully overcome if sufficient smoothness is available. 

\begin{theorem}\label{thm:approx-sobolev}
Assume the conditions of Proposition \ref{prop:general} and assume that $u\in H^{4+s}(\R^d)$ for some $s\geq (d+1)/2$. Then for any $\gamma\in(0,1/2)$ there exist a constant $C_\gamma$ and an $\R^N$-valued, $\sigma(A,B)$-measurable random vector $W$ such that
 \begin{equation}
     \E{\norm{u-U^{A,B}_W}_{H^1_\mu(B_M^d)}} \leq \frac{C_\gamma M^2d^2\norm{u}_{H^{3+s}(\R^d)}}{N^{\frac{s-d/2}{6s+9}}} \quad \text{and}\quad \E{\norm{u-U^{A,B}_W}_{H^2_\mu(B_M^d)}} \leq \frac{ C_\gamma M^2d^2 \norm{u}_{H^{4+s}(\R^d)}}{N^{\frac{\gamma}{\gamma+2}\frac{s-d/2}{2s+3}}}.
 \end{equation}
\end{theorem}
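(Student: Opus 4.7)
The plan is to combine a Fourier truncation argument with the representation-plus-Monte Carlo machinery developed for Theorem \ref{thm:approx-general}, but adapted so that the Monte Carlo estimator takes the form of a \emph{standard} randomized neural network with uniform weights and biases. I would set up a three-stage decomposition $u \to u_R \to u_{R,\epsilon} \to U^{A,B}_W$, where $u_R$ is the Fourier truncation of $u$ to frequencies in $B_R^d$, $u_{R,\epsilon}$ is the smoothed representation from Proposition \ref{prop:general} applied to $u_R$, and $U^{A,B}_W$ is the random network obtained by Monte Carlo sampling. The total Sobolev error is then controlled by the triangle inequality, and the free parameters $R$, $\epsilon$ and $N$ are balanced at the end.

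For the truncation error $\|u - u_R\|_{H^k_\mu(B_M^d)}$ I would use Lemma \ref{lem:sobolev}: the pointwise identity $|D^\bmalpha(u - u_R)(x)| \leq \int_{\R^d \setminus B_R^d} \|\xi\|^{|\bmalpha|}(|G(\xi)|+|G(-\xi)|)\,d\xi$ directly yields a bound of order $d\,\|u\|_{H^{k+s}}\, R^{-(s-d/2)}$ for $k\in\{1,2\}$. For the smoothing error $\|u_R - u_{R,\epsilon}\|_{H^k_\mu}$ I would invoke Proposition \ref{prop:general} with $\pi_A$ chosen as the uniform density on $B_R^d$ (which is radial, so Assumption \ref{ass:piA-radial} holds). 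The constants $\cI_\ell^*$ in Proposition \ref{prop:general}, which involve $\int_{B_R^d} \|\xi\|^k(|G|+|G(-\cdot)|)^2/\pi_A\,d\xi$, can be bounded using the explicit $\pi_A = d/(\omega_d R^d)$ together with the Plancherel-type inequality $\int_{B_R^d}\|\xi\|^k(|G|+|G(-\cdot)|)^2\,d\xi \lesssim \|u\|_{H^{k/2}}^2$, yielding $\cI_4^* \lesssim R^{d/2}\|u\|_{H^2}$ and analogously $\cI_2^* \lesssim R^{d/2}\|u\|_{H^1}$.

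The central and most delicate step is rewriting $u_{R,\epsilon}$ as an expectation under the \emph{standard} uniform distributions and bounding the resulting Monte Carlo variance. Starting from the definition of $u_{R,\epsilon}$ in Proposition \ref{prop:general}, making the change of variable $b = u - y$ in the inner integral, and using that $\alpha(\xi,\cdot)$ is supported in $[-M\|\xi\|_1,0]\cup[-1,1]$ while $\xi\in B_R^d$, one obtains the clean representation
\begin{equation*}
u_{R,\epsilon}(x) = \int_{B_R^d}\int_{-3M\sqrt{d}R-1}^{2} H_\epsilon(x\cdot\xi + b)\, A_R(\xi,b)\, db\, d\xi,
\end{equation*}
with $A_R(\xi,b) := \mathbbm{1}_{B_R^d}(\xi)\int_{b}^{b+B(\xi)}\alpha(\xi,u)\,du$ supported in the stated $b$-interval. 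Consequently $u_{R,\epsilon}(x) = \mathbb{E}_{A',B'}[H_\epsilon(x\cdot A'+B')\, A_R(A',B')/(\pi_{A'}(A')\pi_{B'}(B'))]$, which is exactly the expectation of a standard-RaNN neuron. Lemma \ref{lem:MC} then reduces the MC error to controlling $\mathbb{E}[|D^\bmalpha X_1|^2]$; using Cauchy-Schwarz on $\int A_R(\xi,b)^2\,db \leq B(\xi)^2\int \alpha(\xi,u)^2\,du$, together with the elementary bounds $B(\xi) \leq 2M\sqrt{d}\|\xi\|_2 + 1$ and $|\alpha|\leq C(|G|+|G(-\cdot)|)$, and the importance-sampling factor $1/(\pi_{A'}\pi_{B'}) \lesssim \omega_d M\sqrt{d}\,R^{d+1}/d$, gives an MC error of order $R^{(d+1)/2}\|u\|_{H^4}/(\epsilon^2\sqrt{N})$ in $H^2$-norm, and similarly $R^{(d+1)/2}\|u\|_{H^3}/(\epsilon\sqrt{N})$ in $H^1$-norm.

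Finally the three contributions $R^{-(s-d/2)}$, $R^{d/2}\epsilon^\gamma$ and $R^{(d+1)/2}\epsilon^{-2}N^{-1/2}$ (for the $H^2$ case) are balanced by setting $R = N^a$, $\epsilon = N^{-b}$ and equating exponents, which produces the stated rate, with $\|u\|_{H^{4+s}}$ appearing as the dominant Sobolev norm absorbing both the truncation norm $\|u\|_{H^{2+s}}$ and the MC norm $\|u\|_{H^4}$. The main obstacle I foresee is the importance-sampling inflation $R^{d+1}$ in the Monte Carlo variance: this factor must be tamed by the Fourier truncation gain $R^{-(s-d/2)}$ coming from Lemma \ref{lem:sobolev}, and it is precisely this competition that degrades the essentially dimension-independent rate of Theorem \ref{thm:approx-general} to the slower $N^{-\gamma(s-d/2)/((\gamma+2)(2s+3))}$, which still evades any exponential-in-$d$ scaling and recovers the full rate of Theorem \ref{thm:approx-general} as $s \to \infty$.
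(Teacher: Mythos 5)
Your proposal is correct and shares all three key ingredients with the paper's proof (Proposition~\ref{prop:general}, the Fourier-tail bound of Lemma~\ref{lem:sobolev}, and the Monte Carlo bound of Lemma~\ref{lem:MC}), but you arrange them in the opposite order. The paper decomposes as $u\to u_\epsilon \to u_\epsilon^R \to U_W^{A,B}$: it first smooths (Proposition~\ref{prop:general} applied to the full $u$, which forces the auxiliary density $\pi_A$ to be heavy-tailed, $\pi_A(\xi)\propto\min\{1,\|\xi\|^{-d-1}\}$, so that $\cI_4^*$ is controlled by $\|u\|_{H^{4+s}}$ using $2s\geq d+1$), then truncates the $\xi$-integral to $B_R^d$, paying an extra $\epsilon^{-\ell}$ in the truncation error because derivatives of $H_\epsilon$ are involved. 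You instead decompose as $u\to u_R\to u_{R,\epsilon}\to U_W^{A,B}$: you truncate the Fourier transform first (so the truncation error is $\epsilon$-free), then apply Proposition~\ref{prop:general} to the band-limited $u_R$, for which a uniform $\pi_A$ on $B_R^d$ is admissible and $\cI_4^*(u_R)\lesssim R^{d/2}\|u\|_{H^2}$. Since $u_{R,\epsilon}$ and $u_\epsilon^R$ are the same function (the $\alpha$ of $u_R$ is $\alpha\cdot\mathbbm{1}_{B_R^d}$), the Monte Carlo stages coincide; your trading of the $\epsilon^{-\ell}$ factor for an $R^{d/2}$ factor leads, after balancing $R=N^a$, $\epsilon=N^{-b}$, to a rate $N^{-\gamma(s-d/2)/(2s\gamma+\gamma+4s)}$, which is actually marginally sharper than the stated $N^{-\gamma(s-d/2)/((\gamma+2)(2s+3))}$ and hence still proves the theorem. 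The one small inaccuracy: your claim that the MC variance scales like $R^{d+1}$ holds only after the $B(\xi)^2$ factor has been absorbed into a Sobolev norm via $B(\xi)\lesssim M\sqrt{d}\max\{1,\|\xi\|\}$ rather than bounded crudely by $M\sqrt{d}R$; the paper does the latter and gets $R^{d+3}$, which is what produces its slightly different exponent --- either choice is legitimate, but it is worth being explicit about which one is being made since it changes both the $R$-power and the Sobolev index in the MC term.
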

\begin{proof}
We first recall that from the proof of Theorem \ref{thm:approx-general} it follows that for the function
\begin{equation}
    u_\epsilon(x) = \int_{\R^d} \int_{- \infty}^\infty \int_0^{B(\xi)} H_\epsilon(x \cdot \xi+u - y)  \alpha(\xi,u) d y d u d \xi.
\end{equation}
it holds that
\begin{equation}
    \norm{u-u_\epsilon}_{H^2_\mu(B_M^d)} \leq C_\gamma d(\pi_A(0)+d)M\cI_4^*\epsilon^\gamma
\end{equation}
We now adapt this approximation by restricting the outer integral to the ball $B_R^d = \{\xi\in \R^d: \norm{\xi}_2\leq R\}$, where $R\geq 1$, giving rise to the function
\begin{equation}\label{eq:u-epsilon-R}
    u_\epsilon^R(x) = \int_{B_R^d} \int_{- \infty}^\infty \int_0^{B(\xi)} H_\epsilon(x \cdot \xi+u - y)  \alpha(\xi,u) d y d u d \xi.
\end{equation}

\textit{Step 1: accuracy of $u_\epsilon^R$. }We now quantify the accuracy of this novel approximation. We let $D^\bmalpha$ be a differential operator with order $\ell:=\norm{\bmalpha}_1\leq 2$ and calculate using \eqref{eq:abs-alpha-bound} and \eqref{eq:Bxi-bound} that
\begin{equation}
\begin{split}
    \abs{D^\bmalpha( u_\epsilon(x)-u_\epsilon^R(x))} &\leq \int_{\R^d\setminus B_R^d}  \int_{- \infty}^\infty \int_0^{B(\xi)} \abs{\xi^\bmalpha H_\epsilon^{(\ell)}(x \cdot \xi+u - y)  \alpha(\xi,u)} d y d u d \xi \\
    &\leq \frac{\norm{\sigma^{(\ell)}}_\infty }{\epsilon^\ell} \int_{\R^d\setminus B_R^d} \norm{\xi}^{\ell} B(\xi) \int_{- \infty}^\infty \abs{\alpha(\xi,u)}  d u d \xi \\
    &\leq \frac{\norm{\sigma^{(\ell)}}_\infty 9M\sqrt{d}\cdot 3M\sqrt{d}}{\epsilon^\ell} \int_{\R^d\setminus B_R^d}  \norm{\xi}^{2+\ell} \cG(\xi) d \xi \\
    &= \frac{54M^2 d}{\epsilon^\ell } \int_{\R^d\setminus B_R^d}  \norm{\xi}^{2+\ell} \cG(\xi) d \xi. 
\end{split}
\end{equation}
We can combine this inequality with Lemma \ref{lem:sobolev} to find
\begin{equation}
    \abs{D^\bmalpha( u_\epsilon(x)-u_\epsilon^R(x))} \leq \frac{54M^2d\sqrt{\omega_d}\norm{u}_{H^{2+\ell+s}} }{\epsilon^\ell \sqrt{(2\pi)^d}\sqrt{2s-d}} R^{-s+d/2}.
\end{equation}
Assuming that $2s\geq d+1$ we can already find that
\begin{equation}\label{eq:1-65}
    \norm{u_\epsilon-u_\epsilon^R}_{H^\ell_\mu(B_M^d)} \leq \frac{C M^2d^2 \norm{u}_{H^{2+\ell+s}}}{\epsilon^\ell}R^{-s+d/2}.
\end{equation}

\textit{Step 2: Monte Carlo approximation. }We can also rewrite formula \eqref{eq:u-epsilon-R} as 
\begin{equation}
\begin{split}
    u_\epsilon^R(x) &= \int_{B_R^d} \int_{- \infty}^\infty \int^u_{u-B(\xi)} H_\epsilon(x \cdot \xi+v)  \alpha(\xi,u) d v d u d \xi \\
    &= \int_{B_R^d} \int_{- M\sqrt{d}R}^1 \int^2_{-3M\sqrt{d}R-1} H_\epsilon(x \cdot \xi+v)  \alpha(\xi,u) \mathbbm{1}_{[u-B(\xi), u]}(v) d v d u d \xi,
\end{split}
\end{equation}
such that it also holds that
\begin{equation}
    D^\bmalpha u_\epsilon^R(x) = \mathbb{E}_{\xi,v}\left[D^\bmalpha H_\epsilon(x\cdot \xi + v ) \Tilde{f}(\xi, v)\right],
\end{equation}
where
\begin{equation}
    \Tilde{f}(\xi,v) = 3\omega_d R^d (M\sqrt{d}R+1)\int_{- M\sqrt{d}R}^1  \alpha(\xi,u) \mathbbm{1}_{[u-B(\xi), u]}(v) du. 
\end{equation}
Using \eqref{eq:1-10} and \eqref{eq:1-17} we can upper bound this quantity as
\begin{equation}
    \abs{\Tilde{f}(\xi,v)} \leq 15\omega_d R^d (M\sqrt{d}R+1)^2 \cG(\xi)
\end{equation}
and calculate
\begin{equation}
    \begin{split}
        \E{\abs{\norm{\xi}_2^\ell \Tilde{f}(\xi, v) }^2} &=\int_{\R^d}\int_{\R} \norm{\xi}_2^{2\ell}\abs{\Tilde{f}(\xi,v)}^2 \Tilde{\pi_A}(\xi)\Tilde{\pi_B}(v)dvd\xi \\
        &\leq 125\omega_d R^d (M\sqrt{d}R+1)^3 \int_{B_R^d}\norm{\xi}_2^{2\ell} \cG(\xi)^2 d\xi \\
        &\leq 500\omega_d R^d (M\sqrt{d}R+1)^3 \norm{u}_{H^\ell(\R^d)}^2.
    \end{split}
\end{equation}
Hence, using a similar calculation as in \eqref{eq:1-48} and \eqref{eq:1-49} we find
\begin{equation}
\begin{split}
    \E{\norm{D^\bmalpha u_\epsilon^R - D^\bmalpha\frac{1}{N}\sum_{i=1}^N X_i}_{L^2_\mu(D)}} &\leq \frac{23\sqrt{\omega_d R^d}(M\sqrt{d}R+1)^{3/2}\norm{u}_{H^\ell(\R^d)}}{\epsilon^\ell\sqrt{N}} \\
    &\leq \frac{C d \sqrt{M^3 R^{d+3}}\norm{u}_{H^\ell(\R^d)}}{\epsilon^\ell\sqrt{N}}.
\end{split} 
\end{equation}

\textit{Step 3: total error. }If $2s\geq d+1$ we can combine the above bound with \eqref{eq:1-65} to find for $\ell\leq 1$ that
\begin{equation}
    \begin{split}
        \E{\norm{ u -\frac{1}{N}\sum_{i=1}^N X_i}_{H^1_\mu(D)}} &\leq C M^2d^2 \left(\frac{\norm{u}_{H^{2+\ell+s}(\R^d)}R^{d/2}}{\epsilon} \left[\frac{1}{R^{s}}+\frac{R^{3/2}}{\sqrt{N}}\right]+\cI_2^*\epsilon\right),
    \end{split}
\end{equation}
and for $\ell=2$ that
\begin{equation}
    \begin{split}
        \E{\norm{ u - \frac{1}{N}\sum_{i=1}^N X_i}_{H^2_\mu(D)}} &\leq C_\gamma M^2d^2(\pi_A(0)+d) \left(\frac{\norm{u}_{H^{4+s}(\R^d)}R^{d/2}}{\epsilon^2} \left[\frac{1}{R^{s}}+\frac{R^{3/2}}{\sqrt{N}}\right]+\cI_4^*\epsilon^\gamma\right). 
    \end{split}
\end{equation}

We see that these bounds depend on $\pi_A$ through $\pi_A(0)$ and $\cI^*_2$, resp. $\cI^*_4$. In this case we can choose $\pi_A$ freely as long as Assumption \ref{ass:dist-1} is satisfied. We will use
\begin{equation}
    \pi_A(\xi) = \frac{1}{2\omega_d}\begin{cases}
        1 & \norm{x}_2\leq 1, \\
        \norm{\xi}_2^{-d-1} & \norm{x}_2> 1. 
    \end{cases}
\end{equation}
With this definition we can also compute a concrete upper bound on the following constant:
\begin{equation}
\begin{split}
	\cI_\ell^* &= \max_{k\in \{0, \ldots, \ell \}}\left(\int_{\R^d} \|\xi\|^k \frac{(\abs{G(\xi)}+\abs{G(-\xi)})^2}{ \pi_A(\xi)} d \xi\right)^{\frac{1}{2}}\\ &\leq 2 \max_{k\in \{0, \ldots, \ell \}}\left(\int_{\R^d} \|\xi\|^{k+d+1}(\abs{\hat{u}(\xi)} d \xi\right)^{\frac{1}{2}} \leq 2 \norm{u}_{H^{(\ell+d+1)/2}} \leq 2 \norm{u}_{H^{2+\ell+s}},
 \end{split}
 \end{equation}
where we used $2s\geq d+1$ to find that $(\ell+d+1)/2\leq 2+\ell+s$. 

Finally, in both cases it follows that the optimal choice is $R = N^{\frac{1}{2s+3}}$ and $\epsilon = R^{-\frac{s-d/2}{\gamma+2}}$, where one has to set $\gamma=0$ for $\ell\leq 1$. We find
\begin{equation}
    \begin{split}
        \E{\norm{u - \frac{1}{N}\sum_{i=1}^N X_i}_{H^1_\mu(D)}} &\leq CM^2d^2\norm{u}_{H^{3+s}(\R^d)} N^{-\frac{s-d/2}{4s+6}}\\
        \E{\norm{u - \frac{1}{N}\sum_{i=1}^N X_i}_{H^2_\mu(D)}} &\leq C_\gamma M^2d^2 \norm{u}_{H^{4+s}(\R^d)} N^{-\frac{\gamma}{\gamma+2}\frac{s-d/2}{2s+3}}.
    \end{split}
\end{equation}

\end{proof}

\begin{remark}
The approximation rates in Theorem \ref{thm:approx-sobolev} depend on $s$, $d$ and $\gamma$, which makes them rather hard to interpret. Below we show that in the optimal case of high smoothness and the optimal choice of $\gamma$ the convergence rate in $H^1$-norm tends towards $1/4$ and that in $H^2$ towards $1/10$, 
\begin{equation}
    H^1: \quad \frac{s-d/2}{4s+6} \xrightarrow[s\to\infty]{} \frac{1}{4}\qquad \text{and}\qquad H^2: \quad \frac{\gamma}{\gamma+2}\frac{s-d/2}{2s+3} \xrightarrow[s\to\infty]{}  \frac{1}{2} \cdot \frac{\gamma}{2+\gamma}\xrightarrow[\gamma\to 1/2]{} \frac{1}{10}. 
\end{equation}
This also shows that the adaptation of Theorem \ref{thm:approx-general} to Sobolev functions and uniform distributed weights comes at a hefty cost in terms of regularity. In Theorem \ref{thm:approx-sobolev} the Sobolev regularity $s$ has to be considerably larger than $d$ to obtain a dimension-independent rate, whereas in Theorem \ref{thm:approx-general} the convergence rate did not depend on $s$. On the positive side, we observe that in the limit $s\to\infty$ we exactly retrieve the convergence rates from Theorem \ref{thm:approx-general}. 
\end{remark}

\section{Numerical results}\label{sec:4}
With a solid justification of the approximation capacities of randomized neural networks in place, we now demonstrate empirically that using randomized neural networks \ref{sec:random-nn} within physics-informed ELMs (Section \ref{sec:pielm}) can indeed achieve extremely high accuracy in very short times, even in high-dimensional settings. For this reason, we focus on PDEs which can typically be high-dimensional such as the heat equation, the Black-Scholes equations and the Heston model. 
\subsection{Heat equation}\label{sec:heat}

Let $\Omega \subset \mathbb{R}^d$ be a bounded domain, we consider the following problem
\begin{align}
		u_t(x,t)- \Delta u(x,t) &= f(x,t) \;\, \forall x \in \Omega,\, t\in(0,T), \label{prb1} \\
		u(x,t) &= g(x,t) \;\, \forall x \in \partial \Omega, \, t\in(0,T),\label{bc1} \\
		u(x,0) &= h(x)\quad \,\forall x \in \Omega. \label{bc2}
\end{align}

We consider randomized neural networks as introduced in Section \ref{sec:random-nn} with the activation function $\sigma$ being the hyperbolic tangent (tanh) or sigmoid function. Let
\begin{equation}
    u_W(x,t) = \sum_{i=1}^N W_i \sigma(A_i\cdot (x,t)+B_i)= W \cdot \sigma(A\cdot(x,t)+b), 
\end{equation}
where $A \in \mathbb{R}^{N\times (d+1)}$ and $b\in\mathbb{R}^N$ are randomly generated, $W \in \mathbb{R}^N$ needs to be solved, and $(x,t)$ is chosen from the space-time domain $\Omega \times (0,T)$.

Following the PIELM framework of Section \ref{sec:pielm}, we first randomly generate collocation points $\{(x_p,t_p)\}_{1}^{N_{int}} \in \Omega\times (0,T)$,  $\{(x_k,t_k)\}_{1}^{N_{sb}} \in  \partial \Omega \times (0,T)$ and  $\{x_l\}_{1}^{N_{tb}} \in \Omega $ according to the uniform distribution. Then one can calculate that the linear system \eqref{eq:pielm} in this case is given by,
\begin{align}
    W \cdot \left(\partial_t \sigma(A\cdot(x_p,t_p)+b) - \Delta \sigma(A\cdot(x_p,t_p)+b)\right) &= f(x_p,t_p) \;\;\;\; \text{for} \  p = 1,\cdots,N_{int}, \label{rf1} \\
	W \cdot \sigma(A\cdot(x_k,t_k)+b) &= g(x_k,t_k)  \quad \text{for} \  k = 1,\cdots,N_{sb}, \label{rf2} \\
	W \cdot \sigma(A\cdot(x_l,0)+b)  &= h(x_l)  \quad \;\;\;\;\;\text{for} \  l = 1,\cdots,N_{tb}. \label{rf3}, 
\end{align}
which we solve in a least-square sense. In our experiment we then consider the following problem for different values of $d$. 

\begin{example}\label{exam1} Given $\Omega = [0,1]^d$, T = 1. We consider the problem \eqref{prb1}-\eqref{bc2} with exact solution $u = \frac{\Vert x\Vert^2}{d} + 2t$.
\end{example}

We randomly generate $A$ and $b$ from the uniform distribution $\mathcal{U}(-0.01,0.01)$. To study the impact of different randomized neural networks, $N$ is set to be 800, 1600 and 3200. Although automatic differentiation could be used, we find that it is computationally more efficient and also more accurate to use the difference method. We set the spacing to $10^{-6}$ for first derivatives and $10^{-3}$ for second derivatives. As for the least-square solver, we use {\verb+scipy.linalg.lstsq+}  in Python,
relying on QR decomposition to directly solve the least-squares problem. Finally, we follow the examples in \cite{siddhartha2022pinns}, and let $N_{int} = 8192$, $N_{sb} = 2048$, $N_{tb} =6144 $. Table \ref{tab1} and Table \ref{tab11} shows the resulting $L^2$ errors (computed on a test set of $10^5$ randomly chosen samples) and running time for different dimensions $d$. We observe that when using $\tanh$ as the activation function, the $L^2$-errors  range from $10^{-4} \%$ and $2.4\%$, with computation times varying between  1 and 29 seconds for various dimension $d$. In contrast, using a sigmoid activation function improves the accuracy, achieving an error of $0.9\%$ when $d=100$.

\begin{table}[!ht]
\renewcommand{\arraystretch}{1.2}
\setlength\tabcolsep{4.2mm}
	\begin{center}		
		\begin{tabular}{|c|c|c|c|c|c|c|}
			\hline
			$d$ & $N_{int}$ & $N_{sb}$ &  $N_{tb}$   & $N$ & $\Vert u-u^{A,b}_W\Vert_{L^2}$ & Time (s) \\
			\hline
			\multirow{3}{*}{5} &	\multirow{3}{*}{8192}& \multirow{3}{*}{2048}&\multirow{3}{*}{6144} & 800&0.00027\%&1\\		
			\cline{5-7}  
			&&&&1600&0.00015\%&3\\
			\cline{5-7}  
			&&&&3200&0.000085\%&10\\
			\hline		
			\multirow{3}{*}{10} &	\multirow{3}{*}{8192}& \multirow{3}{*}{2048}&\multirow{3}{*}{6144} &
			800& 0.0010\%&2\\
			\cline{5-7}  
			&&&&1600&0.00044\%&4\\
			\cline{5-7}   
			&&&&3200&0.00027\%&11\\
			\hline
			\multirow{3}{*}{20} &	\multirow{3}{*}{8192}& \multirow{3}{*}{2048}&\multirow{3}{*}{6144} &
			800& 1.3\%&2\\
			\cline{5-7}    
			&&&&1600&0.57\%&5\\
			\cline{5-7}   
			&&&&3200&0.0054\%&13\\
			\hline		
			\multirow{3}{*}{50} &	\multirow{3}{*}{8192}& \multirow{3}{*}{2048}&\multirow{3}{*}{6144}&
			800&3.3\%&4\\
			\cline{5-7}    
			&&&&1600&2.5\%&8\\
			\cline{5-7}    
			&&&&3200&1.5\%&18\\
			\hline		
			\multirow{3}{*}{100} &	\multirow{3}{*}{8192}& \multirow{3}{*}{2048}&\multirow{3}{*}{6144}&
			800&3.9\%&6\\
			\cline{5-7}   
			&&&&1600&2.8\%&13\\
			\cline{5-7}  
			&&&&3200&2.4\%&29\\
			\hline		
		\end{tabular}
	\end{center}
	\caption{$\sigma =  \textbf{tanh}$, $L^2$ errors of using different randomized neural networks for d-dimensional heat equation.}
	\label{tab1}
\end{table}

\begin{table}[!ht]
	\renewcommand{\arraystretch}{1.2}
	\setlength\tabcolsep{4.2mm}
	\begin{center}		
		\begin{tabular}{|c|c|c|c|c|c|c|}
			\hline
			$d$ & $N_{int}$ & $N_{sb}$ &  $N_{tb}$   & $N$ & $\Vert u-u^{A,b}_W\Vert_{L^2}$ & Time (s) \\
			\hline
			\multirow{3}{*}{5} &	\multirow{3}{*}{8192}& \multirow{3}{*}{2048}&\multirow{3}{*}{6144} & 800&0.0011\%&1\\		
			\cline{5-7}  
			&&&&1600&0.00079\%&3\\
			\cline{5-7}  
			&&&&3200&0.00074\%&10\\
			\hline		
			\multirow{3}{*}{10} &	\multirow{3}{*}{8192}& \multirow{3}{*}{2048}&\multirow{3}{*}{6144} &
			800& 0.0090\%&2\\
			\cline{5-7}  
			&&&&1600&0.0018\%&4\\
			\cline{5-7}   
			&&&&3200&0.00091\%&11\\
			\hline
			\multirow{3}{*}{20} &	\multirow{3}{*}{8192}& \multirow{3}{*}{2048}&\multirow{3}{*}{6144} &
			800& 0.54\%&2\\
			\cline{5-7}    
			&&&&1600&0.21\%&5\\
			\cline{5-7}   
			&&&&3200&0.0095\%&13\\
			\hline		
			\multirow{3}{*}{50} &	\multirow{3}{*}{8192}& \multirow{3}{*}{2048}&\multirow{3}{*}{6144}&
			800&1.4\%&4\\
			\cline{5-7}    
			&&&&1600&1.0\%&8\\
			\cline{5-7}    
			&&&&3200&0.52\%&18\\
			\hline		
			\multirow{3}{*}{100} &	\multirow{3}{*}{8192}& \multirow{3}{*}{2048}&\multirow{3}{*}{6144}&
			800&1.8\%&6\\
			\cline{5-7}   
			&&&&1600&1.3\%&13\\
			\cline{5-7}  
			&&&&3200&0.94\%&29\\
			\hline		
		\end{tabular}
	\end{center}
	\caption{$\sigma = \textbf{sigmoid}$, $L^2$ errors of using different randomized neural networks for d-dimensional heat equation.}
	\label{tab11}
\end{table}

\subsection{Black-Scholes equations}\label{sec:bs}

Next, we use randomized neural networks to solve the Black-Scholes model with uncorrelated noise. For this reason we let $W = (W^1, \ldots, W^d): [0,T]\times \Omega\to\R^d$ be a $d$-dimensional standard Brownian motion, we let $\mu\in \R$ be the stock return and $\epsilon_1, \ldots, \epsilon_d$ (positive) stock volatilities. By using Itô's formula and the Feynman-Kac theorem, it holds that for a payoff function $\psi$ that the following function,
\begin{equation}\label{eq:sol-bs}
u(x,t) = \mathbb{E}[\psi([x^i \mathrm{exp}((\mu-\frac{1}{2}\epsilon_i^2 )t + \epsilon_iW_t^i)]_{i=1}^d)],
\end{equation}
is the solution of the PDE problem
\begin{align}
	\partial_t u(x,t) &= \frac{1}{2} \sum_{i=1}^{d}  |\sigma_i x^i|^2 (\partial^2_{x_ix_i}u)(x,t) + \sum_{i=1}^{d} \mu x^i (\partial_{x_i}u)(x,t) ,\;\,  \forall x \in \Omega,\, t\in(0,T), \label{prb2} \\
	u(x,t) &= \frac{1}{N_s}\sum_{n = 1}^{N_s} \psi([x^iexp((\mu-\frac{1}{2}\epsilon_i^2 )t + \epsilon_iW_t^{i,n})]_{i=1}^d),\;\;\;\; \;\;\;\forall x \in \partial \Omega, \, t\in(0,T),\label{bc3} \\
	u(x,0) &= \psi(x) \qquad \qquad\qquad \qquad\qquad \qquad\qquad \qquad \quad \quad \;\;\;\,\forall x \in \Omega. \label{bc4}
\end{align}
In the above, we imposed \emph{approximate} Dirichlet boundary conditions \eqref{bc3} in the sense that we replaced exact Dirichlet boundary conditions (in terms of an expectation value as in \eqref{eq:sol-bs} by a sample mean of size $N_s$, i.e. by averaging over $N_s$ realizations of the Brownian motion \cite{RT}. 


Then we can obtain the optimal weights of a randomized  neural network by solving a least-square problem with the following linear system.
\begin{align}
	&W \cdot \Big(\partial_t \sigma(A\cdot(x_p,t_p)+b) - \frac{1}{2} \sum_{i=1}^{d}  |\sigma_i x_p^i|^2 (\partial^2_{x_ix_i} \sigma(A\cdot(x_p,t_p)+b))  \notag \\
	&- \sum_{i=1}^{d} \mu x_p^i (\partial_{x_i}\sigma(A\cdot(x_p,t_p)+b)) \Big) = 0 \quad \text{for} \  p = 1,\cdots,N_{int}, \label{rf4} \\
	&\beta_1 W \cdot \sigma(A\cdot(x_k,t_k)+b) = \frac{\beta_1}{N_s}\sum_{n = 1}^{N_s} \psi\Big([x^i_k\mathrm{exp}\big((\mu-\frac{1}{2}\epsilon_i^2 )t_k + \epsilon_iW_k^{i,n}\big)]_{i=1}^d\Big) \quad \text{for} \  k = 1,\cdots,N_{sb}, \label{rf5} \\
	&\beta_2W \cdot \sigma(A\cdot(x_l,0)+b)  = \beta_2\psi(x_l)  \quad \text{for} \  l = 1,\cdots,N_{tb}, \label{rf6}
\end{align}
where $\beta_1$ and $\beta_2$ are hyperparameters that balance the scaling in the above linear system. In our experiment we then consider the following problem for different values of $d$. 

\begin{example}\label{exam2} Given $\Omega = [90,110]^d$, $T = 1$. Let $\mu = -0.05$ and $\sigma_i = \frac{1}{10}+\frac{i}{200}$, $i\in \{1,\cdots,d\}$. We consider the problem \eqref{prb2}-\eqref{bc4} with the initial condition $\psi(x) = max(max_{1\leq i\leq d}(x_i) - 100,0)$.
\end{example}
In this experiment, we add a normalization to the input data by $\hat{x} = (x-90)/20 $ and $\hat{t} =t$. Then  $A$ and $b$ are randomly generated with a uniform distribution $\mathcal{U}(-0.1,0.1)$, and $\sigma$ is the sigmoid function or tanh function. The  size of the sample mean in the boundary condition is set to be $N_s = 16384$. Let $N_{int} = 32768$, $N_{sb} =16384$ and $N_{tb} =16384$.  Table \ref{tab3} and Table \ref{tab4} present the relative $L^2$-errors (computed by $\mathcal{E}_{G}^r = \frac{\Vert u-u^{A,b}_W\Vert_{L^2}}{\Vert u\Vert_{L^2}}$ on a test set of $10^5$ randomly chosen samples) for different dimensions $d$. The relative $L^2$-error when $d = 100$ is approximately three percent, with a computation time of six minutes, demonstrating the efficiency of using randomized neural networks.
\begin{table}[!ht]
	\begin{center}
		\renewcommand{\arraystretch}{1.2}
		\setlength\tabcolsep{2.5mm}
		\begin{tabular}{|c|c|c|c|c|c|c|c|c|c|}
			\hline
			$d$ & $N_{int}$ & $N_{sb}$ &  $N_{tb}$ &$N_s$ & $N$& $\beta_1$&$\beta_2$& $\mathcal{E}_{G}^r $ & Time (s) \\
			\hline
			1&	32768&16384&16384 &16384& 800 & 5 & 10 &0.9\%&11\\		
			\hline		
			2&	32768&16384&16384 &16384& 800 & 5 & 10 &0.8\%&12\\		
			\hline		
			10&	32768&16384&16384 &16384& 800 & 5 & 10 &1.1\%&15\\		
			\hline		
			20&	32768& 16384&16384 &16384& 3200 & 5 & 10 &1.0\%&120\\		
			\hline		
			50&	32768&16384&16384 &16384& 3200 & 5 & 100 &1.7\%&162\\		
			\hline		
			100&32768& 16384&16384 &16384& 3200 & 5 & 100 &3.1\%&246\\		
			\hline		
			
		\end{tabular}
	\end{center}
	\caption{$\sigma =  \textbf{tanh}$, relative $L^2$ errors of using randomized neural networks for d-dimensional Black-Scholes model.}
	\label{tab3}
\end{table}

\begin{table}[!ht]
	\begin{center}
		\renewcommand{\arraystretch}{1.2}
		\setlength\tabcolsep{2.5mm}
		\begin{tabular}{|c|c|c|c|c|c|c|c|c|c|}
			\hline
			$d$ & $N_{int}$ & $N_{sb}$ &  $N_{tb}$ &$N_s$ & $N$& $\beta_1$&$\beta_2$& $\mathcal{E}_{G}^r $ & Time (s) \\
			\hline
			1&	32768&16384&16384 &16384& 800 & 5 & 10 &1.1\%&11\\		
			\hline		
			2&	32768&16384&16384 &16384& 800 & 5 & 10 &1.0\%&13\\		
			\hline		
			10&	32768&16384&16384 &16384& 800 & 5 & 10 &1.1\%&15\\
			\hline		
			20&	32768& 16384&16384 &16384& 3200 & 5 & 10 &1.0\%& 130\\		
			\hline		
			50&	32768&16384&16384 &16384& 3200 & 5 & 100 &2.0\%& 165\\		
			\hline		
			100&32768& 16384&16384 &16384& 3200 & 5 & 100 &3.2\%& 244\\		
			\hline		
			
		\end{tabular}
	\end{center}
	\caption{$\sigma = \textbf{sigmoid}$, relative $L^2$ errors of using randomized neural networks for d-dimensional Black-Scholes model.}
	\label{tab4}  
\end{table}

\subsection{Heston model}\label{sec:heston}

Finally, we demonstrate that randomized neural networks can also be used to solve the Heston model. Following the notation of
\cite{RT}, we set $\delta\in \N$, $d=2\delta$, $\alpha,\beta\in\R$ and $\theta, \kappa>0$ to introduce the multi-dimensional Heston model satisyfing the Feller condition $2\kappa\theta>\beta^2$. 

By using It\"o formula and Feynman-Kac theorem, let $\psi$ be a payoff function and $\delta = d/2$, then
\begin{align}
	u(x,t) &=\mathbb{E}\bigg[\psi    \bigg(  \sum_{i=1}^{\delta}\Big(\big[x^{2i-1}\mathrm{exp}\big((\alpha - \frac{x^{2i}}{2})t + W_t^{2i-1}\sqrt{x^{2i}}\big)\big]e_{2i-1} \notag \\ 
	&+ \Big[max\{\big[max\{\frac{\beta}{2}\sqrt{t}, max\{\frac{\beta}{2}\sqrt{t},\sqrt{x^{2i}}\}  + \frac{\beta}{2} (\rho W^{2i-1}_t + \sqrt{1-\rho^2}W^{2i}_t)\}\big]^2 
	\\ &+ (\kappa\theta - \frac{\beta^2}{4} - \kappa x^{2i})t, 0 \}\Big]e_{2i} \Big) \bigg)\bigg]. \notag
\end{align}

is the solution of the following PDE problem
\begin{align}
	\partial_t u(x,t) &- \sum_{i=1}^{\delta} \Big[\alpha x^{2i-1}(\partial_{x_{2i-1}}u)(x,t) + \kappa(\theta -x^{2i})(\partial_{x_{2i}}u)(x,t) \Big]  \notag
	\\ &- \sum_{i=1}^{\delta}  \Big[\frac{|x^{2i}|}{2}(x^{2i-1})^2(\partial^2_{x_{2i-1}x_{2i-1}}u)(x,t) \label{prb3}\\
	&+ 2x^{2i-1}\beta\rho(\partial^2_{x_{2i-1}x_{2i}}u)(x,t) + \beta^2( \partial^2_{x_{2i}x_{2i}}u)(x,t)\Big]\;\;\forall x \in\Omega, \, t\in(0,T), \notag   \\
	u(x,t)&=\frac{1}{N_s}\sum_{n = 1}^{N_s}\psi   \bigg(  \sum_{i=1}^{\delta}\Big(\big[x^{2i-1}\mathrm{exp}\big((\alpha - \frac{x^{2i}}{2})t + W_t^{2i-1}\sqrt{x^{2i}}\big)\big]e_{2i-1} \notag \\ 
	&+ \Big[max\{\big[max\{\frac{\beta}{2}\sqrt{t}, max\{\frac{\beta}{2}\sqrt{t},\sqrt{x^{2i}}\}  + \frac{\beta}{2} (\rho W^{2i-1}_t + \sqrt{1-\rho^2}W^{2i}_t)\}\big]^2  \label{bc5}
	\\ &+ (\kappa\theta - \frac{\beta^2}{4} - \kappa x^{2i})t, 0 \}\Big]e_{2i} \Big) \bigg)
	\qquad\qquad\qquad\qquad\;\;\forall x \in \partial \Omega, \, t\in(0,T),  \notag \\
	u(x,0) &= \psi(x) \qquad \qquad\qquad \qquad\qquad \qquad\qquad \qquad \;\;\;\qquad \forall x \in \Omega, \label{bc6}
\end{align}
where Dirichlet boundary condition \eqref{bc5} is imposed by the sample mean.

Similarly, we can obtain the optimal weights of a randomized  neural network by solving a least-square problem with the corresponding linear system.

\begin{example}\label{exam3} Given $\Omega = _{i=1}^{d/2}([90,110] \times [0.02,0.2])$, $T = 1$.  Let $\alpha = 1/20$,  $\beta = 1/5$, $\kappa = 6/10$, $\theta = 1/25$, $\rho = -1/5 $.  We consider the problem \eqref{prb3}-\eqref{bc6} with the initial condition $\psi(x) = max(110-\frac{2}{d}\sum_{i=1}^{d/2}x_{2i-1},0).$
\end{example}
In this experiment, we apply the same normalization strategy, and use the 
tanh function as the activation function. The values of $A$ and $b$ are randomly generated with a uniform distribution $\mathcal{U}(-0.1,0.1)$,  The sample size is kept the same as in Example  \ref{sec:bs}. Finally, Table \ref{tab5} presents the relative $L^2$-errors for different dimensions $d$, highlighting the efficiency of using randomized neural networks.

\begin{table}[!ht]
	\begin{center}
		\renewcommand{\arraystretch}{1.2}
		\setlength\tabcolsep{2.5mm}
		\begin{tabular}{|c|c|c|c|c|c|c|c|c|c|}
			\hline
			$d$ & $N_{int}$ & $N_{sb}$ &  $N_{tb}$ &$N_s$ & $N$& $\beta_1$&$\beta_2$& $\mathcal{E}_{G}^r $ & Time (s) \\
			\hline		
			2&	32768&16384&16384 &16384& 800 & 800 & 800 &2.4\%&12\\	
			\hline		
			4&	32768&16384&16384 &16384& 800 & 5 & 50 &1.4\%&14\\	
			\hline		
			10&	32768&16384&16384 &16384& 800 & 5 & 10 &1.1\%&16\\		
			\hline		
			30&	32768& 16384&16384 &16384& 3200 & 5 & 10 &1.3\%&160\\		
			\hline		
			50&	32768&16384&16384 &16384& 3200 & 10 & 100 &1.8\%&220\\		
			\hline		
			100&32768& 16384&16384 &16384& 3200 & 10 & 100 &2.9\%&370\\		
			\hline		
			
		\end{tabular}
	\end{center}
	\caption{$\sigma = \tanh$, relative $L^2$ errors of using randomized neural networks for d-dimensional Heston model.}
	\label{tab5}
\end{table}

\section{Conclusion}

In this work, we present theoretical results and numerical experiments demonstrating the potential of using randomized neural networks in efficiently solving high-dimensional PDEs. Our main contribution is the proof that randomized neural networks can approximate functions in higher-order Sobolev norms at dimension-independent convergence rates, overcoming the curse of dimensionality. Specifically, we establish upper bounds on the approximation error for functions in $H^1$ and $H^2$ norms, highlighting the effectiveness of these networks in solving linear first-order and second-order PDEs. Our experimental results confirm that randomized neural networks provide high accuracy and low computational cost, even for complex problems such as the heat equation, Black-Scholes model, and Heston model, with errors and computation times well within acceptable limits. This work expands the existing literature by showing that randomized neural networks can achieve high efficiency and accuracy in practical high-dimensional PDE applications, paving the way for their broader use in scientific computing.

\bibliographystyle{abbrv}
\bibliography{ref}

\begin{thebibliography}{10}

\bibitem{barron1993universal}
A.~R. Barron.
\newblock Universal approximation bounds for superpositions of a sigmoidal
  function.
\newblock {\em IEEE Transactions on Information theory}, 39(3):930--945, 1993.

\bibitem{chen2022bridging}
J.~Chen, X.~Chi, Z.~Yang, et~al.
\newblock Bridging traditional and machine learning-based algorithms for
  solving {PDEs}: the random feature method.
\newblock {\em J Mach Learn}, 1:268--98, 2022.

\bibitem{Dang2024local}
H.~Dang and F.~Wang.
\newblock Local randomized neural networks with hybridized discontinuous
  {Petrov--Galerkin} methods for {Stokes--Darcy} flows.
\newblock {\em Physics of Fluids}, 36(8), 2024.

\bibitem{deryck2023operator}
T.~De~Ryck, F.~Bonnet, S.~Mishra, and E.~de~B{\'e}zenac.
\newblock An operator preconditioning perspective on training in
  physics-informed machine learning.
\newblock {\em International Conference on Learning Representations}, 2024.

\bibitem{deryck2021pinn}
T.~De~Ryck and S.~Mishra.
\newblock Error analysis for physics informed neural networks {(PINNs)}
  approximating {Kolmogorov PDEs}.
\newblock {\em Advances in Computational Mathematics}, 48(6):79, 2022.

\bibitem{deryck2022generic}
T.~De~Ryck and S.~Mishra.
\newblock Generic bounds on the approximation error for physics-informed (and)
  operator learning.
\newblock {\em Advances in Neural Information Processing Systems 35}, 2022.

\bibitem{dong2021local}
S.~Dong and Z.~Li.
\newblock Local extreme learning machines and domain decomposition for solving
  linear and nonlinear partial differential equations.
\newblock {\em Computer Methods in Applied Mechanics and Engineering},
  387:114129, 2021.

\bibitem{dwivedi2020physics}
V.~Dwivedi and B.~Srinivasan.
\newblock Physics informed extreme learning machine (pielm)--a rapid method for
  the numerical solution of partial differential equations.
\newblock {\em Neurocomputing}, 391:96--118, 2020.

\bibitem{HEJ1}
W.~E, J.~Han, and A.~Jentzen.
\newblock Deep learning-based numerical methods for high-dimensional parabolic
  partial differential equations and backward stochastic differential
  equations.
\newblock {\em Communications in Mathematics and Statistics}, 5(4):349--380,
  2017.

\bibitem{gallicchio2020deep}
C.~Gallicchio and S.~Scardapane.
\newblock Deep randomized neural networks.
\newblock In {\em Recent Trends in Learning From Data: Tutorials from the INNS
  Big Data and Deep Learning Conference (INNSBDDL2019)}, pages 43--68.
  Springer, 2020.

\bibitem{gonon2021random}
L.~Gonon.
\newblock Random feature neural networks learn black-scholes type pdes without
  curse of dimensionality.
\newblock {\em Journal of Machine Learning Research}, 24(189):1--51, 2023.

\bibitem{gonon2020approximation}
L.~Gonon, L.~Grigoryeva, and J.-P. Ortega.
\newblock Approximation bounds for random neural networks and reservoir
  systems.
\newblock {\em The Annals of Applied Probability}, 33(1):28--69, 2023.

\bibitem{grohs2018proof}
P.~Grohs, F.~Hornung, A.~Jentzen, and P.~Von~Wurstemberger.
\newblock {\em A proof that artificial neural networks overcome the curse of
  dimensionality in the numerical approximation of Black--Scholes partial
  differential equations}, volume 284.
\newblock American Mathematical Society, 2023.

\bibitem{huang2004extreme}
G.-B. Huang, Q.-Y. Zhu, and C.-K. Siew.
\newblock Extreme learning machine: a new learning scheme of feedforward neural
  networks.
\newblock In {\em 2004 IEEE international joint conference on neural networks
  (IEEE Cat. No. 04CH37541)}, volume~2, pages 985--990. Ieee, 2004.

\bibitem{huang2006extreme}
G.-B. Huang, Q.-Y. Zhu, and C.-K. Siew.
\newblock Extreme learning machine: theory and applications.
\newblock {\em Neurocomputing}, 70(1-3):489--501, 2006.

\bibitem{hutzenthaler2020proof}
M.~Hutzenthaler, A.~Jentzen, T.~Kruse, and T.~A. Nguyen.
\newblock A proof that rectified deep neural networks overcome the curse of
  dimensionality in the numerical approximation of semilinear heat equations.
\newblock {\em SN partial differential equations and applications}, 1(2):1--34,
  2020.

\bibitem{jentzen2018proof}
A.~Jentzen, D.~Salimova, and T.~Welti.
\newblock A proof that deep artificial neural networks overcome the curse of
  dimensionality in the numerical approximation of {Kolmogorov} partial
  differential equations with constant diffusion and nonlinear drift
  coefficients.
\newblock {\em {ArXiv} preprint arXiv:1809.07321}, 2018.

\bibitem{krishnapriyan_characterizing}
A.~Krishnapriyan, A.~Gholami, S.~Zhe, R.~Kirby, and M.~W. Mahoney.
\newblock Characterizing possible failure modes in physics-informed neural
  networks.
\newblock {\em Advances in Neural Information Processing Systems},
  34:26548--26560, 2021.

\bibitem{Kuty}
G.~Kutyniok, P.~Petersen, M.~Raslan, and R.~Schneider.
\newblock A theoretical analysis of deep neural networks and parametric {PDEs}.
\newblock {\em Constructive Approximation}, pages 1--53, 2021.

\bibitem{Lag2}
I.~E. Lagaris, A.~Likas, and P.~G. D.
\newblock Neural-network methods for boundary value problems with irregular
  boundaries.
\newblock {\em IEEE Transactions on Neural Networks}, 11:1041--1049, 2000.

\bibitem{Lag1}
I.~E. Lagaris, A.~Likas, and D.~I. Fotiadis.
\newblock Artificial neural networks for solving ordinary and partial
  differential equations.
\newblock {\em IEEE Transactions on Neural Networks}, 9(5):987--1000, 2000.

\bibitem{LMR1}
K.~O. Lye, S.~Mishra, and D.~Ray.
\newblock Deep learning observables in computational fluid dynamics.
\newblock {\em Journal of Computational Physics}, page 109339, 2020.

\bibitem{LMPR1}
K.~O. Lye, S.~Mishra, D.~Ray, and P.~Chandrashekar.
\newblock Iterative surrogate model optimization ({ISMO}): An active learning
  algorithm for pde constrained optimization with deep neural networks.
\newblock {\em Computer Methods in Applied Mechanics and Engineering},
  374:113575, 2021.

\bibitem{MM3}
S.~Mishra and R.~Molinaro.
\newblock Physics informed neural networks for simulating radiative transfer.
\newblock {\em Journal of Quantitative Spectroscopy and Radiative Transfer},
  270:107705, 2021.

\bibitem{siddhartha2022pinns}
S.~Mishra and R.~Molinaro.
\newblock Estimates on the generalization error of physics-informed neural
  networks for approximating a class of inverse problems for pdes.
\newblock {\em IMA Journal of Numerical Analysis}, 42:981--1022, 2022.

\bibitem{MM1}
S.~Mishra and R.~Molinaro.
\newblock Estimates on the generalization error of physics informed neural
  networks ({PINNs}) for approximating {PDEs}.
\newblock {\em IMA Journal of Numerical Analysis}, 2022.

\bibitem{nelsen2021random}
N.~H. Nelsen and A.~M. Stuart.
\newblock The random feature model for input-output maps between banach spaces.
\newblock {\em SIAM Journal on Scientific Computing}, 43(5):A3212--A3243, 2021.

\bibitem{pao1992functional}
Y.-H. Pao and Y.~Takefuji.
\newblock Functional-link net computing: theory, system architecture, and
  functionalities.
\newblock {\em Computer}, 25(5):76--79, 1992.

\bibitem{rahimi2007random}
A.~Rahimi and B.~Recht.
\newblock Random features for large-scale kernel machines.
\newblock {\em Advances in neural information processing systems}, 20, 2007.

\bibitem{rahimi2008uniform}
A.~Rahimi and B.~Recht.
\newblock Uniform approximation of functions with random bases.
\newblock In {\em 2008 46th annual allerton conference on communication,
  control, and computing}, pages 555--561. IEEE, 2008.

\bibitem{rahimi2008weighted}
A.~Rahimi and B.~Recht.
\newblock Weighted sums of random kitchen sinks: Replacing minimization with
  randomization in learning.
\newblock {\em Advances in neural information processing systems}, 21, 2008.

\bibitem{KAR1}
M.~Raissi and G.~E. Karniadakis.
\newblock Hidden physics models: Machine learning of nonlinear partial
  differential equations.
\newblock {\em Journal of Computational Physics}, 357:125--141, 2018.

\bibitem{KAR2}
M.~Raissi, P.~Perdikaris, and G.~E. Karniadakis.
\newblock Physics-informed neural networks: A deep learning framework for
  solving forward and inverse problems involving nonlinear partial differential
  equations.
\newblock {\em Journal of Computational Physics}, 378:686--707, 2019.

\bibitem{schiassi2021extreme}
E.~Schiassi, R.~Furfaro, C.~Leake, M.~De~Florio, H.~Johnston, and D.~Mortari.
\newblock Extreme theory of functional connections: A fast physics-informed
  neural network method for solving ordinary and partial differential
  equations.
\newblock {\em Neurocomputing}, 457:334--356, 2021.

\bibitem{SZ1}
C.~Schwab and J.~Zech.
\newblock Deep learning in high dimension: Neural network expression rates for
  generalized polynomial chaos expansions in uq.
\newblock {\em Analysis and Applications}, 17(01):19--55, 2019.

\bibitem{shang2024randomized}
Y.~Shang and F.~Wang.
\newblock Randomized neural networks with {Petrov--Galerkin} methods for
  solving linear elasticity and {Navier--Stokes} equations.
\newblock {\em Journal of Engineering Mechanics}, 150(4):04024010, 2024.

\bibitem{shang2023Randomized}
Y.~Shang, F.~Wang, and J.~Sun.
\newblock Randomized neural network with {Petrov-Galerkin} methods for solving
  linear and nonlinear partial differential equations.
\newblock {\em Communications in Nonlinear Science and Numerical Simulation},
  127:107518, 2023.

\bibitem{sun2024local}
J.~Sun, S.~Dong, and F.~Wang.
\newblock Local randomized neural networks with discontinuous {Galerkin}
  methods for partial differential equations.
\newblock {\em Journal of Computational and Applied Mathematics}, 445:115830,
  2024.

\bibitem{sun2024local_W}
J.~Sun and F.~Wang.
\newblock Local randomized neural networks with discontinuous {Galerkin}
  methods for diffusive-viscous wave equation.
\newblock {\em Computers and Mathematics with Applications}, 154:128--137,
  2024.

\bibitem{sun2018approximation}
Y.~Sun, A.~Gilbert, and A.~Tewari.
\newblock On the approximation properties of random {ReLU} features.
\newblock {\em ArXiv preprint arXiv:1810.04374}, 2018.

\bibitem{RT}
R.~Tanios.
\newblock Physics informed neural networks in computational finance: High
  dimensional forward and inverse option pricing.
\newblock Master's thesis, ETH Zurich, 2021.

\bibitem{wang2021understanding}
S.~Wang, Y.~Teng, and P.~Perdikaris.
\newblock Understanding and mitigating gradient flow pathologies in
  physics-informed neural networks.
\newblock {\em SIAM Journal on Scientific Computing}, 43(5):A3055--A3081, 2021.

\bibitem{wang2022and}
S.~Wang, X.~Yu, and P.~Perdikaris.
\newblock When and why {PINNs} fail to train: A neural tangent kernel
  perspective.
\newblock {\em Journal of Computational Physics}, 449:110768, 2022.

\end{thebibliography}

\end{document}